\title[General Bufetov]{Decoding Rauzy Induction: An Answer to Bufetov's General Question}
\author[Fickenscher]{Jon Fickenscher}
\date{\today}
\newtheorem{thm}{Theorem}[section]
\newtheorem{lem}[thm]{Lemma}
\newtheorem{cor}[thm]{Corollary}
\newtheorem{main}{Main Theorem}
\newtheorem*{mainlem}{Main Lemma}
\newtheorem*{nnthm}{Theorem}
\newtheorem{ques}{Question}
\theoremstyle{definition}
\newtheorem{defn}[thm]{Definition}
\newtheorem{rem}[thm]{Remark}
\newcommand{\DDD}{\mathcal{D}}
\newcommand{\III}{\mathcal{I}}
\newcommand{\NNN}{\mathcal{N}}
\newcommand{\OOO}{\mathcal{O}}
\newcommand{\PPP}{\mathcal{P}}
\newcommand{\CC}{\mathbb{C}}
\newcommand{\NN}{\mathbb{N}}
\newcommand{\RR}{\mathbb{R}}
\newcommand{\ZZ}{\mathbb{Z}}
\newcommand{\PF}{Perron-Frobenius}
\newcommand{\eps}{\varepsilon}
\newcommand{\perm}{\mathfrak{S}}
\newcommand{\irr}{\mathfrak{S}^0}
\newcommand{\SPAN}{\mathrm{span}}
\newcommand{\RHScase}[1]{\left\{\begin{array}{ll} #1 \end{array}\right.}
\begin{document}

\begin{abstract}
         Given a typical interval exchange transformation,
	    we may naturally associate to it an infinite sequence of matrices
	    through Rauzy induction.
	  These matrices encode visitations of the induced interval exchange transformations
	    within the original.
	  In 2010, W.\ A.\ Veech showed that these matrices suffice to 
	      recover the original interval exchange transformation,
	      unique up to topological conjugacy,
	      answering a question of A.\ Bufetov.
	  In this work, we show that interval exchange transformation
	      may be recovered and is unique modulo conjugacy when we instead only know consecutive products of
	      these matrices.
	  This answers another question of A.\ Bufetov.
	  We also extend this result to any inductive scheme that
	      produces square visitation matrices.
\end{abstract}

\maketitle

\section{Introduction}

      Interval exchange transformations (IET's) are invertible piece-wise translations on an interval $I$.
      They are typically defined by a permutation $\pi$ on $\{1,\dots,n\}$ and a choice of partitioning
	  of $I$ into sub-intervals $I_1,\dots,I_n$ with respective lengths $\lambda_1,\dots,\lambda_n$.
      The sub-intervals are reordered by $T$ according to $\pi$.

      Rauzy induction, as defined in \cite{cRau1979}, is a map that sends an IET $T$ on $I$ to
	  its first return $T'$ on $I' \subset I$ for suitably chosen $I'$.
      For almost every\footnote{For every appropriate $\pi$ and Lebesgue almost every $\lambda = (\lambda_1,\dots,\lambda_n)\in \RR_+^n$.} IET $T$,
	  Rauzy induction may be applied infinitely often.
      This yields a sequence $T^{(k)}$, $k\geq 0$, of IET's so that each transition $T^{(k-1)}\mapsto T^{(k)}$
	  is the result of a Rauzy induction.
      To each step we may define a \emph{visitation matrix} $A_k$ so that $(A_k)_{ij}$ counts the number of disjoint images of the intervals $I_j^{(k)}$
		in $I_i^{(k-1)}$ before return to $I^{(k)}$.
      It is part of the general theory of IET's that the initial $\pi$ and the sequence of $A_k$'s define $T$ uniquely up to
	      topological conjugacy.
      In preparation for \cite{cBuf2013}, A. Bufetov posed the following.
      \begin{ques}[A. Bufetov]
	  Given only the sequence of $A_k$'s, can the initial permutation $\pi$ be determined and is it unique?
      \end{ques}
      In response, W. A. Veech gave an affirmative answer in \cite[Theorem 1.2]{cVe2010}.
      This allowed A. Bufetov to ensure the injectivity of a map that intertwines the Kontsevich-Zorich cocycle
	      with a renormalization cocycle (see the remark ending Section 4.3.1 in \cite{cBuf2013}).

      However, if another induction scheme was used to get visitation matrices,
	  we may not know each individual $A_k$.
      For instance, we may follow A. Zorich's acceleration of Rauzy induction (see \cite{cZor1996})
	  or choose to induce on the first interval $I_1$.
      In either of these cases,
	    our visitation matrix $B$ will actually be a product
	    $A_1\cdots A_N$ of the $A_k$'s realized by Rauzy induction.
      Motivated by this, we say that a sequence $B_\ell$, $\ell\in \NN$, is a product of the $A_k$'s if
	      there exist an increasing sequence of integers $k_\ell$, $\ell\geq 0$, so that
	      $k_0 = 0$ and
	    $B_\ell = A_{k_{\ell-1}+1}A_{k_{\ell-1}+2}\cdots A_{k_{\ell}}$
	    for each $\ell\geq 1$.
      We now are able to pose Bufetov's second, more general, question.
      \begin{ques}[A. Bufetov]
	  Given instead a sequence $B_\ell$, $\ell\in \NN$, of products of the $A_k$'s, can the initial permutation $\pi$ still be determined and is it unique?
      \end{ques}

      This work is dedicated to answering this second question and its generalizations.
      We answer in the affirmative by our main results.
      Extended Rauzy induction is more general than regular Rauzy induction
	  and is discussed in Section \ref{SSecLRI} before Lemma \ref{LemIDOCext}.

      \begin{main}
	  If $B_1,B_2,B_3,\dots$ are consecutive matrix products defined by an infinite sequence of steps of
		(extended) Rauzy induction,
		then the initial permutation $\pi$ is unique.
      \end{main}

      Recently, J. Jenkins proved this result in \cite{cJenkinsThesis} for the $3\times 3$ matrix case.
      He then explored the $4\times 4$ case numerically.

      In the most general case,
	  we call the inductions on $I'\supsetneq I'' \supsetneq I'' \supsetneq \dots$ an
	  \emph{admissible induction sequence}
	  if the $n\times n$ visitation matrices $A_k$ from $T^{(k-1)}$ to $T^{(k)}$ are well-defined.
      We then are able to answer Bufetov's question in a much broader setting.
      
      \begin{main}
	If visitation matrices $B_1,B_2,B_3,\dots$ are defined by an admissible induction sequence,
	      then the initial permutation $\pi$ is unique.
      \end{main}

     \subsection*{Outline of Paper}

      In Section \ref{SecDef} we establish our notation and provide known results concerning
	  IET's and related objects as well as general linear algebra.
	  In particular, the anti-symmetric matrix $L_\pi$ is defined given $\pi$,
	    and this matrix plays a central role here.
      In Section \ref{SecPF} the \PF\ eigenvalue and eigenvector are discussed.
      The main argument of that section is Corollary \ref{CorPosNeverNull},
		which says that the \PF\ eigenvector cannot be in the
		nullspace of any linear combination $L_\pi-c L_{\pi'}$ for
		permutations $\pi,\pi'$ and scalar $c$.
      Section \ref{SecMain} begins with a reduction of Main Theorem 1 to
		a special case, stated as the Main Lemma.
		The section ends with a proof of the Main Lemma.
      Section \ref{SecMain2} reduces Main Theorem 2 to Main Theorem 1 by Lemma \ref{LemAdmissIsRauzy}.
		This lemma states that any admissible induction sequence
		    must arise from extended Rauzy induction.
      Appendix \ref{SecA} provides further results concerning admissibility
		and induced maps which lead to the proof of Lemma \ref{LemAdmissIsRauzy}
		in Appendix \ref{SecB}.


\section{Definitions}\label{SecDef}

	 An \emph{interval} or \emph{sub-interval} is of the form $[a,b)$
		  for $a<b$,
	  i.e. a non-empty subset of $\RR$ that is closed on the left and open on the right.
	  If $I = [a,b)$ is an interval, $|I| = b-a$ denotes its \emph{length}.
	    For a set $C$, $\#C$ denotes its cardinality.
	  A \emph{translation} $\phi:I \to J$ for intervals $I$ and $J$
	      is any function that may be expressed as $\phi(x) = x + c$ for constant $c$.
	  If $\psi:C\to D$ is a function and $E\subseteq C$ we use the notation $\phi E$ to mean
	    the \emph{image of $E$ by $\psi$}, or
		  $ \psi E = \{\phi(c): c\in E\}\subseteq D.$
	  For $\lambda\in \RR_+^n$, or a vector in $\RR^n$ with all positive entries,
		  $ |\lambda| = \lambda_1 + \dots \lambda_n$
	  denotes the $1$-norm of $\lambda$.

	  \subsection{Permutations and a Matrix}

	  The notations in this section describe either standard definitions
		from algebra or standard literature on interval exchange transformations.
	  Let $\perm_n$ be the set of all permutations on $\{1,\dots,n\}$, i.e. bijections on $\{1,\dots,n\}$.
	  \begin{defn}\label{DefIrr}
		The \emph{irreducible permutations} on $\{1,\dots,n\}$, $\irr_n$, is the set of $\pi\in \perm_n$
		      so that
			$\pi\{1,\dots,k\} = \{1,\dots,k\}$ iff $k = n$.
	  \end{defn}

	  \begin{defn}\label{DefLPi}
	      For $\pi\in \perm_n$, the anti-symmetric $n\times n$ matrix $L_\pi$ is given by
			$$ (L_{\pi})_{ij} = \RHScase{1, & i < j \mbox{ and } \pi(i) > \pi (j),\\
						    -1, & i > j \mbox{ and } \pi(i) < \pi(j),\\
						     0, & \text{otherwise,}}$$
		$1\leq i,j,\leq n$.
	  \end{defn}

	  The proof of the Main Theorem requires that no distinct $\pi,\pi' \in \irr_n$
		satisfy $L_\pi = L_{\pi'}$.
	  This is given by the next result.

	  \begin{lem}\label{LemLPi}
	      The map from $\perm_n$ to the set of $n\times n$ matrices given by
		    $$ \pi \mapsto L_\pi$$
	      is injective.
	  \end{lem}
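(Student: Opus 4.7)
The plan is to give an explicit formula recovering each value $\pi(i)$ from row $i$ of $L_\pi$; this immediately implies injectivity. I first unpack the sign pattern forced by Definition \ref{DefLPi}: in row $i$, every entry strictly right of the diagonal belongs to $\{0,1\}$, taking the value $1$ exactly when $\pi(i) > \pi(j)$, while every entry strictly left of the diagonal belongs to $\{0,-1\}$, taking the value $-1$ exactly when $\pi(i) < \pi(j)$. (This is consistent with, and in fact equivalent to, the antisymmetry of $L_\pi$.)

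Let $r_i$ denote the number of $+1$ entries of row $i$ in columns $j > i$, and let $\ell_i$ denote the number of $-1$ entries of row $i$ in columns $j < i$. From the description above,
\[
r_i = \#\{j > i : \pi(j) < \pi(i)\}, \qquad (i-1) - \ell_i = \#\{j < i : \pi(j) < \pi(i)\}.
\]
Since $\pi$ is a bijection on $\{1,\dots,n\}$, the total number of indices $j$ with $\pi(j) < \pi(i)$ equals $\pi(i) - 1$, so summing the two counts yields
\[
\pi(i) = i + r_i - \ell_i.
\]
The right-hand side is read off directly from $L_\pi$, so $\pi$ is determined by $L_\pi$ and the map is injective.

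There is no serious obstacle here, as the argument is a direct counting. The only care needed is matching the antisymmetric sign convention correctly so that the $+1$'s and $-1$'s in opposite halves of the row are tallied as inversions of the appropriate type; once that bookkeeping is fixed, the formula for $\pi(i)$ follows in one line.
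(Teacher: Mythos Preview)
Your argument is correct and is essentially the paper's own proof: the paper shows $\pi(i)-i=\sum_{j}(L_\pi)_{ij}$ via indicator functions, and your quantities satisfy $r_i-\ell_i=\sum_j(L_\pi)_{ij}$, so your formula $\pi(i)=i+r_i-\ell_i$ is the same identity with the row sum split into its positive and negative parts.
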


	  \begin{proof}
		The result follows immediately from the relationship
		      $$ \pi(i) - i = \sum_{\pi(j) \leq \pi(i)} 1 - \sum_{k\leq i}1
			  = \sum_{j=1}^n \chi_{\pi(i)\geq \pi(j)}(j) - \chi_{i\geq j}(j)= \sum_{j=1}^n (L_\pi)_{ij},$$
		for all $i\in\{1,\dots,n\}$, where $\chi$ is the indicator function.
	  \end{proof}

	  The final two definitions simply fix notation of established concepts from linear algebra
		and will be used without remark for what follows.

	  \begin{defn}
	      If $L$ is an $n\times n$ matrix, $\NNN_L$ will denote its \emph{nullspace}, i.e.
		      $$ \NNN_L = \{v\in \CC^n: Lv =0\}.$$
	      For $\pi\in \irr_n$, $\NNN_\pi = \NNN_{L_\pi}$.
	  \end{defn}

	  \begin{defn}
	      If $L$ is an $n\times n$ anti-symmetric matrix, $(\cdot,\cdot)_L$ is the \emph{bilinear form
			    associated to $L$} given by
			$$ (u,v)_L = u^* L v,$$
	      where the last value is treated as a scalar.
	      For $\pi \in \irr_n$, $(\cdot,\cdot)_\pi = (\cdot,\cdot)_{L_\pi}$.
	  \end{defn}

	\subsection{Interval Exchange Transformations}

	      An interval exchange transformation $T$ is an invertible transformation on an interval
		  that divides the interval into sub-intervals of lengths $\lambda_1,\dots,\lambda_n$ and
		  reorders them according to $\pi$.
	      We will assume $n \geq 2$, as $T$ is the identity if $n=1$.
	      
	      More precisely, for fixed $\pi\in \irr_n$ and $\lambda \in \RR_+^n$,
			let $\beta_j = \sum_{i \leq j} \lambda_i$ for $0\leq j \leq n$
			and $I = [0,\beta_n)$,
			where $\beta_0 =0$ and $\beta_n = |\lambda|$.
	     For each interval $I_j= [\beta_{j-1},\beta_j)$, then $T$ restricted to $I_j$ is just translation by a
		      value $\omega_j$.
	      If the $j^{th}$ interval \emph{is in position}\footnote{Many texts on interval exchange transformations let $\pi(j)$ describe
		      the interval in position $j$ after the application of $T$.} $\pi(j)$ after the application of $T$,
		      then
		      $ \omega_j =  \sum_{\pi(i) < \pi(j)} \lambda_i- \sum_{k< j} \lambda_k.$
	      We see that $\omega = L_\pi \lambda$.
	      \begin{defn}
		  The \emph{interval exchange transformation (IET)} defined by $(\pi,\lambda)$, $T=\III_{\pi,\lambda}$,
			is a map $I \to I$ defined piece-wise by
			    $$ T(x) = x + \omega_j,\text{ for }x\in I_j,$$
		    $1\leq j \leq n$.
	      \end{defn}
	      We restrict our attention to $\pi\in \irr_n$ when defining an IET.
	      Indeed, if $\pi \in \perm_n \setminus \irr_n$ then there exists $k<n$ so that
		    $T[0,\beta_k) = [0,\beta_k)$.
	      In this case we may reduce to studying $T$ restricted to $[0,\beta_k)$
		    and $[\beta_k,\beta_n)$ separately.

		\begin{defn}\label{DefIDOC}
			IET $T = \III_{\pi,\lambda}$ satisfies the \emph{infinite distinct orbit condition or  i.d.o.c.}\footnote{This is also
			  known as the \emph{Keane Condition}.} iff
			  each orbit $\OOO_T(\beta_j) = \{T^k \beta_j: k\in \ZZ\}$, $1\leq j <n$,
			  is infinite and the orbits are pairwise distinct.	 
		\end{defn}

		  If $\lambda$ is rationally independent, meaning $c_1\lambda_1 + \dots + c_n \lambda_n = 0$
		      has a solution with $c_i$'s integers iff $c_1 = \dots = c_n =0$,
		      then $T = \III_{\pi,\lambda}$ is an i.d.o.c. IET.
		  Therefore, $T$ is is i.d.o.c. for fixed $\pi$ and Lebesgue almost every $\lambda\in \RR_+^n$.

		  \begin{lem}[Keane \cite{cKea1975}]\label{LemMinimal}
			If $T$ is an i.d.o.c. IET on $I$, then for any sub-interval $J\subseteq I$, $\bigcup_{k=0}^\infty J = I$,
			and for any $x\in I$ the orbit $\OOO_T(x)$ is infinite and dense.
		  \end{lem}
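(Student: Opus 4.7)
The plan is to first tackle the assertion $\bigcup_{k=0}^\infty T^k J = I$ for every sub-interval $J \subseteq I$, since the remaining conclusions follow from it. Indeed, once this is established, density of any orbit $\mathcal{O}_T(x)$ is immediate: for each open sub-interval $J \subseteq I$, the equality forces $x \in T^k J$ for some $k \geq 0$, equivalently $T^{-k}x \in J$, so $\mathcal{O}_T(x)$ meets every open sub-interval. Infiniteness of orbits then follows because a periodic point $T^N x = x$ would produce a maximal interval on which $T^N$ acts as the identity translation; the endpoints of that maximal interval would themselves have finite $T$-orbits, and at least one such endpoint would have to coincide with some discontinuity $\beta_j$, contradicting the i.d.o.c.

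For the main claim I would argue by contradiction. Set $F = \bigcup_{k=0}^\infty T^k J$ and $E = I\setminus F$, and assume $E \neq \emptyset$. Since $T$ acts as a translation on each continuity piece, each $T^k J$ is a finite union of half-open intervals of the form $[a,b)$, with new endpoints introduced only from the discontinuities $\beta_1, \dots, \beta_{n-1}$ at each step. Consequently $F$ is a union of such intervals and $E$ is also a union of intervals, whose boundary points lie in the countable set $\bigcup_{k\geq 0} T^k\bigl(\partial J \cup \{\beta_1,\dots,\beta_{n-1}\}\bigr)$. Because $T$ preserves Lebesgue measure, the iterates $T^k J$ cannot remain pairwise disjoint, so $T^iJ \cap T^jJ \neq \emptyset$ for some $i<j$ and hence $J \cap T^{j-i}J \neq \emptyset$. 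This produces a well-defined first-return map $T_J$ on $J$ outside a finite exceptional set lying in the backward orbits of the $\beta_j$, and $T_J$ is itself a piecewise isometry of $J$ with finitely many pieces.

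The main obstacle is ruling out $E\neq \emptyset$ rigorously. Off the discontinuities, $T^{-1}E \subseteq E$, so each endpoint of a component of $E$ must, under repeated backward iteration, eventually land on some $\beta_j$. Since i.d.o.c.\ supplies only $n-1$ infinite, pairwise-disjoint discontinuity orbits, these must exhaust the boundary data of $E$ in a combinatorially rigid way. The plan is to exploit this finiteness: any non-empty, $T$-stable $E$ would either allow a component of $E$ to return to itself in finitely many steps (yielding a finite orbit of some $\beta_j$, contrary to i.d.o.c.) or force two distinct boundary points of $E$ to share an orbit (contrary to the pairwise-disjoint clause of i.d.o.c.). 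Both alternatives contradict the hypothesis, so $E = \emptyset$ and $F = I$. The delicate part is verifying that no configuration of endpoints can evade both horns of this dichotomy; this is the combinatorial heart of Keane's original argument and is where the full strength of i.d.o.c.\ is used.
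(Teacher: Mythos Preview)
The paper does not supply a proof of this lemma; it is stated with a citation to Keane's 1975 paper and then used without further argument. There is therefore no in-paper proof to compare against.

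Your outline is a reasonable roadmap toward Keane's argument, and the logical reductions are correct: the covering property $\bigcup_{k\ge 0}T^kJ=I$ for every sub-interval $J$ does imply density of every orbit, and density already forces each orbit to be infinite, so your separate periodicity argument is correct but redundant. The backward-invariance $T^{-1}E\subseteq E$ for $E=I\setminus F$ is also right (and in fact holds everywhere, not just off the discontinuities, since $T$ is a bijection and $TF\subseteq F$).

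That said, you explicitly flag the gap yourself, and it is a genuine one. Two points deserve attention. First, you treat $E$ as a union of interval ``components'' whose endpoints can be tracked, but $F$ is only a countable union of half-open intervals, and its complement in $I$ need not decompose into intervals in any useful way; the standard route avoids this by working instead with the closure of a single orbit, whose complement is genuinely open and hence a countable disjoint union of open intervals. Second, your proposed dichotomy---either a component returns to itself (forcing a periodic $\beta_j$) or two distinct boundary points share an orbit---is not forced without an intervening finiteness step: one must argue that the set of, say, left endpoints of these components is finite (controlled by the $n-1$ discontinuities of $T$), so that iteration on this finite set must eventually repeat. Without that bound, nothing rules out infinitely many components with boundary points lying in infinitely many distinct orbits, and neither horn of your dichotomy is triggered. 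As written, then, this is a plausible plan rather than a proof.
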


	  \subsection{Admissible Inductions}

	    Consider any interval $I'\subseteq I$ for IET $T = \III_{\pi,\lambda}$,
		$\pi\in \irr_n$ and $\lambda\in \RR_+^n$.
	    Informally, we will call $I'$ \emph{admissible} if the induced map
	      $T'$ is an IET on $n$ intervals and an $n\times n$ visitation matrix
	      $A$ is well defined.
	    Let $r(x) = \min\{k\in \NN: T^kx\in I'\}$ be the \emph{return time} of $x\in I'$.
	    Then the \emph{induced transformation} for $T$ on $I'$ is denoted by
	      $T|_{I'}$ and is given by
	      $$ T|_{I'}(x) = T^{r(x)}(x)$$
	     for each $x\in I'$.
	    \begin{defn}\label{DefAdmiss}
		Let $T= \III_{\pi,\lambda}$ be an i.d.o.c. $n$-IET on $I$.
		The sub-interval $I'\subseteq I$ is \emph{admissible} for $T$ if
		there exists a partition of $I'$ into $n$ consecutive sub-intervals
		$I_1',I_2',\dots,I_n'$ so that for each $1\leq i \leq n$:
		\begin{enumerate}
		      \item $r(x) = r(x')$ for each $x,y\in I'_i$,
			  letting $r_i$ be this common value,
		      \item for each $0\leq k < r_i$, $T$ restricted to $T^kI'_i$ is
			  a translation,
		      \item for each $0 \leq k < r_i$, $T^k I'_i \subseteq I_j$ for some
			  $1\leq j \leq n$.
		  \end{enumerate}
		  The $n\times n$ \emph{visitation matrix} $A$ is given by $A_{ij} = \#\{0\leq k < r_i: T^k I_j' \subseteq I_i\}$.
	      \end{defn}

	    This definition of admissible is equivalent to the one given in \cite[Section 3.3]{cDolPer2013} for
		 i.d.o.c. $T$.
	      It follows that if $I'$ is admissible for $T$, then $T' = T|_I'$
		    is an $n$-IET,
		  and
		      $$ \lambda = A \lambda'$$
	      where $T' = \III_{\pi',\lambda'}$ and
		       $\lambda'_j = |I'_j|$ for $1\leq j \leq n$.

	      \begin{rem}\label{RemInduce}
	      Consider i.d.o.c. $T$ on $I$ with $I''\subseteq I' \subseteq I$.
	      It is a consequence that any of the two following statements imply the third:
	      \begin{enumerate}
		  \item $I'$ is admissible for $T$ on $I$,
		  \item $I''$ is admissible for $T$ on $I$,
		  \item $I''$ is admissible for $T'$ on $I'$, where $T'$ is the induction of $T$ on $I'$.
	      \end{enumerate}
	      Also, if $A_1$ is the visitation matrix of the induction from $I$ to $I'$ and
	      		$A_2$ is the visitation matrix of the induction from $I'$ to $I''$,
	      		then the product $A_1A_2$ is the visitation matrix 
	      		of the induction from $I$ to $I''$.
	      \end{rem}

	      Please refer to Appendix \ref{SecA} for a more thorough discussion of
		      admissible inductions.
	      For example, Lemma \ref{LemA1} proves the remark assuming the first statement and one other holds.

	\subsection{Rauzy Induction}

	      Rauzy induction was defined in \cite{cRau1979},
		and we see that it is defined as an admissible induction over an
		appropriately chosen sub-interval $I'$.
	      We recall the definition here and discuss some results relevant for our work.

	      For $\pi \in \irr_n$, let $m = \pi^{-1}(n)$ denote the interval placed last by $\pi$.
	      Assume that $\lambda_n \neq \lambda_m$ and let $I' = [0, \beta_n - \min\{\lambda_m,\lambda_n\})$.
	      The induced transformation $T'= T|_{I'}$ is also an IET and we give the
		      description below for  $T'= \III_{\pi',\lambda'}$.
	      If $\lambda_n > \lambda_m$, then
			  $$ \pi'(i) = \RHScase{\pi(i), & \pi(i) < \pi(n),\\
					  \pi(n) +1, & \pi(i)=n,\\
					  \pi(i)+1, & \pi(i) > \pi(n),
				}
				\text{ and }
			    \lambda'_i = \RHScase{ \lambda_n - \lambda_m, & i = n,\\
				  \lambda_i, & i < n,
				}$$
	      for $1\leq i \leq n$.
	      If $\lambda_n<\lambda_m$, then instead
			  $$ \pi'(i) = \RHScase{\pi(i), & i \leq m,\\
					  \pi(n), & i = m+1,\\
					  \pi(i-1), & i > m+1,
				}
				\text{ and }
			    \lambda'_i = \RHScase{ \lambda_i, & i <m,\\
					\lambda_m - \lambda_n, & i = m,\\
					\lambda_n, & i = m+1,\\
					\lambda_{i-1}, & i >m+1,
				}$$
	      for $1\leq i \leq n$.

		\begin{defn}
		      Consider $T$ defined by $\pi$ and $\lambda$ with $m$ as above.
			If $\lambda_n>\lambda_m$, the change from $T$ to $T'$
			  is a move of \emph{Rauzy induction of type $0$}.
			If $\lambda_n<\lambda_m$, the change from $T$ to $T'$
			  is a move of \emph{Rauzy induction of type $1$}.
			If $\lambda_n = \lambda_m$, Rauzy Induction is not
			  well defined.
		\end{defn}

		For fixed $\pi\in \irr_n$, the condition $\lambda_n = \lambda_m$ is of zero Lebesgue measure
		      in $\RR_+^n$.
		Given $\pi$ and the type of Rauzy induction $\eps$, we may define the visitation matrix $A = A_{(\pi,\eps)}$
		      as given in Definition \ref{DefAdmiss}.
		If $\eps = 0$, then
			$$ A_{ij} = \RHScase{1, & i = j,\\
					1, & i=n,~j = m,\\
					0, & \text{otherwise,}}$$
		 and if $\eps = 1$, then
			$$ A_{ij} = \RHScase{1, & i=j<m,\\
				      1, & j=i+1 >k,\\
				      1, & i =n,~j = m,\\
				      0, & \text{otherwise.}}$$
		  Suppose we may act by $N$ consecutive steps of Rauzy induction on $T = \III_{\pi,\lambda}$,
			and let $T,T',T'',\dots, T^{(N)}$ be the resulting IET's at each step where
			$T^{(k)} = \III_{\pi^{(k)},\lambda^{(k)}}$.
		  Let $\eps_k$ be the type of induction from $T^{(k-1)}$ to $T^{(k)}$.
		  If
			    $$ B = A_{(\pi,\eps_1)}A_{(\pi',\eps_2)}\cdots A_{(\pi^{(N-1)},\eps_N)}$$
		  then
			$ \lambda = B \lambda^{(N)}.$

		  We may verify that if $A= A_{\pi,\eps}$ and $\pi'$ is the result of the type $\eps$
			induction on $\pi$, then
			$ A^* L_\pi A = L_{\pi'}$.
		  For a proof of this with different notation, see \cite[Lemma 10.2]{cVia2006}.
		  It follows that if $B$ is defined by $N$ consecutive steps of induction with initial
			permutation $\pi$ and ending at $\pi^{(N)}$, then
			    \begin{equation}\label{EqBLB}
			       B^* L_{\pi} B = L_{\pi^{(N)}}.
			    \end{equation}
		  We finish this section by answering a question: for which IET's can Rauzy induction be applied infinitely many times?
		  See Section 4 of \cite{cYoc2006} for a treatment of this result.
		  \begin{lem}\label{LemIDOC}
		      If $\pi\in \irr_n$, $\lambda\in \RR_+^n$ and $T = \III_{\pi,\lambda}$, then the following are equivalent:
			      \begin{enumerate}
				  \item\label{LemIDOCa} $T$ is i.d.o.c., and
				  \item\label{LemIDOCb} $T$ admits infinitely many steps of Rauzy induction.
			      \end{enumerate}
		  \end{lem}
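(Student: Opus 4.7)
The plan is to prove the two implications separately. Both pivot on the observation that the exact obstruction to a further Rauzy step, $\lambda_n^{(k)} = \lambda_m^{(k)}$ with $m = (\pi^{(k)})^{-1}(n)$, is equivalent to the orbit coincidence $T^{(k)}(\beta_{m-1}^{(k)}) = \beta_{n-1}^{(k)}$ (interpreting $\beta_0^{(k)}$ as $0$ when $m = 1$).

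For (\ref{LemIDOCa}) $\Rightarrow$ (\ref{LemIDOCb}), I argue by induction on $k$ that the i.d.o.c.\ property of $T$ propagates to $T^{(k)}$ at every step where Rauzy induction has been defined. Preservation under a single induction step follows from Lemma \ref{LemMinimal}: the orbits of $T^{(k+1)}$ are intersections of $T^{(k)}$-orbits with $I^{(k+1)}$, every $T^{(k)}$-orbit meets $I^{(k+1)}$ infinitely often by minimality, so distinct infinite orbits are preserved. With i.d.o.c.\ of $T^{(k)}$ in hand, $\lambda_n^{(k)} = \lambda_m^{(k)}$ would force $T^{(k)}(\beta_{m-1}^{(k)}) = \beta_{n-1}^{(k)}$; a short case analysis---handling $m \geq 2$ directly and $m = 1$ via the unique preimage of $0$ under $T^{(k)}$---combined with irreducibility of $\pi^{(k)}$ (which excludes the degenerate matchings $m = n$ and the preimage-index equaling $1$), exhibits two distinct discontinuities of $T^{(k)}$ on a single orbit, a contradiction. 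Hence Rauzy induction can be iterated.

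For (\ref{LemIDOCb}) $\Rightarrow$ (\ref{LemIDOCa}), I argue by contrapositive. If $T$ is not i.d.o.c., there exist $i,j \in \{1,\dots,n-1\}$ and $p \geq 0$ with $T^p\beta_i = \beta_j$ and $(i,p) \neq (j,0)$; call such a triple a \emph{connection} and pick one with $p$ minimal. The main technical step, and the principal obstacle, is to show that each Rauzy move either strictly decreases this minimum---by re-expressing the updated discontinuities as $T^{(k)}$-iterates of the old ones and noting that the new right endpoint $\beta_n^{(k)} - \min(\lambda_n^{(k)},\lambda_m^{(k)})$ shortens the connection whenever it lies on its orbit---or else produces the termination condition $\lambda_n^{(k)} = \lambda_m^{(k)}$. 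Since $p$ cannot drop to $0$ without forcing two distinct discontinuities to coincide, termination must occur in finitely many steps. Executing this requires a case analysis separated by Rauzy type and careful handling of boundary configurations where $i,j$, or their $T$-images coincide with $m-1$ or $n-1$; the forward direction, by contrast, reduces to a short computation leveraging irreducibility.
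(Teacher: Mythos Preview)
The paper does not prove Lemma~\ref{LemIDOC}; it records the statement and refers the reader to Section~4 of \cite{cYoc2006}. Your sketch follows the standard strategy found there, and the direction (\ref{LemIDOCa})~$\Rightarrow$~(\ref{LemIDOCb}) is correctly outlined: the obstruction $\lambda_n^{(k)}=\lambda_m^{(k)}$ forces $T^{(k)}(\beta_{m-1}^{(k)})=\beta_{n-1}^{(k)}$, and your handling of the boundary cases via irreducibility is sound.

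For (\ref{LemIDOCb})~$\Rightarrow$~(\ref{LemIDOCa}) the strategy---track a connection through Rauzy induction until it forces $\lambda_n=\lambda_m$---is the right one, but the dichotomy you assert is false. It is \emph{not} the case that each Rauzy move either strictly decreases the minimal connection length or terminates. Take $n=3$, $\pi=(3,1,2)$, and $\lambda=(1,1,b)$ with $b$ large: then $T(\beta_2)=\lambda_2=\lambda_1=\beta_1$ is a connection of length~$1$. Here $m=\pi^{-1}(3)=1$ and $\lambda_3>\lambda_1$, so the move is type~$0$; one checks that $\pi'=\pi$, the discontinuities $\beta_1,\beta_2$ are unchanged, and $\beta_2\notin I_m=I_1$, so $T'(\beta_2)=T(\beta_2)=\beta_1$ persists with length exactly~$1$. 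This repeats for roughly $b$ consecutive steps with neither a decrease nor termination. In general the minimal connection length is only non-increasing under a Rauzy step; the argument in \cite{cYoc2006} supplies the missing piece, showing---via the specific interaction of the rightmost singularities $\beta_{n-1}$ and $T(\beta_{m-1})$ with the shrinking right endpoint---that it cannot stay constant indefinitely. Your own parenthetical ``whenever it lies on its orbit'' already concedes that the decrease is conditional, so the conclusion that termination must occur in finitely many steps does not follow from the mechanism you state.
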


		  The following is shown in Sections 1.2.3--1.2.4 of \cite{cMarMouYoc2005}.
		  \begin{lem}\label{LemIDOC2}
			If $T = \III_{\pi,\lambda}$ admits infinitely many steps of Rauzy induction
				    and $A_k = A_{(\pi^{(k)},\lambda^{(k)})}$ are the corresponding matrices,
			      then for each $j\in \NN$ there exists $k_0 = k_0(j)\in \NN$ so that
			      for all $k>k_0$,
				    $$ A_{[j,j+k]} = A_k A_{k+1} \cdots A_{j+k}$$
			      is a matrix with all positive entries.
		  \end{lem}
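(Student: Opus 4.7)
The goal is to show that, for each $j \in \NN$, the product $A_{[j,j+k]} = A_j A_{j+1}\cdots A_{j+k}$ has all positive entries once $k$ is sufficiently large. By Remark~\ref{RemInduce} this product is the visitation matrix for an induction of $T^{(j-1)}$ onto $I^{(j+k)}$; its $(m,i)$-entry counts the number of iterates $(T^{(j-1)})^t I^{(j+k)}_i$ with $0 \leq t < r$ that lie inside $I^{(j-1)}_m$, where $r$ is the associated return time. Since the tail sequence of Rauzy inductions beginning at step $j$ is still infinite, Lemma~\ref{LemIDOC} gives that $T^{(j-1)}$ is itself i.d.o.c., so we may assume without loss of generality that the base map is the original $T$ and the product in question is $A_1 A_2 \cdots A_k$.

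The plan is to combine the minimality of $T$ with growth of the heights of the Kakutani-Rokhlin tower over $I^{(k)}$. Lemma~\ref{LemMinimal} applied to $T$ yields a uniform first-visit property: for each interval $I_m$ there is an integer $N_m$ such that for every $x \in I$, some $0 \leq t \leq N_m$ satisfies $T^t x \in I_m$. On the other hand, for an i.d.o.c.\ IET the return times $r_i^{(k)}$ of $I^{(k)}_i$ to $I^{(k)}$ grow without bound as $k \to \infty$; this can be tracked directly from the Rauzy-induction structure, since at each step exactly one column height is augmented, and the i.d.o.c.\ hypothesis forces every index to serve as the top label $\pi^{-1}(n)$ infinitely often. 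Once $\min_i r_i^{(k)} > \max_m N_m$, every column of the tower is forced to visit every base interval $I_m$ at least once, yielding $(A_1 \cdots A_k)_{mi}\geq 1$ for all $m$ and $i$.

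The main obstacle will be establishing the uniform first-visit bound $N_m$: in a continuous minimal system it is immediate from compactness, but $T$ is only piecewise continuous. I would address this by analyzing the decreasing sequence of escape sets $E_N := I \setminus \bigcup_{t=0}^{N-1} T^{-t} I_m$, each a finite union of half-open intervals whose endpoints lie in the $T^{-1}$-orbits of the endpoints of $I_m$ together with the discontinuities $\beta_1,\dots,\beta_{n-1}$. Lemma~\ref{LemMinimal} applied to $T^{-1}$ forces $\bigcap_N E_N = \emptyset$, and the i.d.o.c.\ condition, which keeps all of these orbits distinct and infinite, prevents a nontrivial tail from stabilizing; it follows that $E_N = \emptyset$ for some finite $N = N_m$. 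A secondary technical point is the claim that each index serves as $\pi^{-1}(n)$ infinitely often along the Rauzy trajectory, which I expect to derive from the infinite distinct orbit condition on the endpoints of $T$ together with the structural description of Rauzy induction already recorded before Equation~\eqref{EqBLB}.
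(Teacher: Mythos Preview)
The paper does not supply its own proof of this lemma; it only records the citation to \cite{cMarMouYoc2005}, Sections~1.2.3--1.2.4. Your overall strategy---a uniform first-hitting bound from minimality combined with unbounded growth of all column heights---is sound, and is more dynamical in flavor than the combinatorial argument in that reference, which tracks ``winners'' and ``losers'' along the Rauzy path directly.

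That said, two steps in your sketch do not close as written. First, the escape-set argument: for nested half-open sets, $\bigcap_N E_N = \emptyset$ together with ``the tail does not stabilize'' does \emph{not} force some $E_N$ to be empty (take $E_N=[0,1/N)$). A clean repair is already available in the paper: by Lemma~\ref{LemA2} the first return of $T$ to $I_m$ has a natural decomposition into at most $n+2$ intervals, hence finitely many return times; the associated Rokhlin tower covers $I$ by Lemma~\ref{LemMinimal}, and its maximal height furnishes the uniform bound $N_m$. Second, the claim that every column height tends to infinity is not yet justified. The assertion ``every index serves as the top label $\pi^{-1}(n)$ infinitely often'' is not the correct one: in a type~$0$ move it is the interval labeled $m=\pi^{-1}(n)$ whose height grows, but in a type~$1$ move it is the interval labeled $n$ (subsequently relabeled) whose height grows, so what you actually need is that every interval is the \emph{loser} infinitely often. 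Deducing that from the i.d.o.c.\ hypothesis is precisely the nontrivial content of the cited passage in \cite{cMarMouYoc2005}, and your sketch does not yet provide an argument for it.
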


	\subsection{Left Rauzy Induction}\label{SSecLRI}
	Left Rauzy induction was defined in \cite{cVe1990} and describes
	    inducing on $T = \III_{\pi,\lambda}$ by $I' = [\min\{\lambda_1,\lambda_{m'}\},|\lambda|)$,
	      where $m' = \pi^{-1}(1)$ for $\pi\in \irr_n$ and $\lambda\in \RR_+^n$.
	In other words, instead of removing a sub-interval from the right as in (right) Rauzy induction,
	      we remove one from the left.
	We will give the explicit definitions and then show how this type of induction relates to
	      (right) Rauzy induction.

	  \begin{defn}
	      \emph{Left Rauzy induction} is the result of taking the first return of $T = \III_{\pi,\lambda}$ on $I'$
		      as defined above.
		The induction is \emph{type $\tilde{0}$} iff $\lambda_1>\lambda_{m'}$ and \emph{type $\tilde{1}$}
		    iff $\lambda_1< \lambda_{m'}$.
		The induction is not well defined if $\lambda_1 = \lambda_{m'}$.
	  \end{defn}
	Let $T' = \III_{\pi',\lambda'}$ be the resulting IET by this induction
	      (up to a translation so that $I'$ begins at $0$).
	If the induction is type $\tilde{0}$, then
	      $$
		\pi'(i) = \RHScase{	\pi(1)-1, & i = m',\\	
				  \pi(i)-1, & 1<\pi(i)<\pi(i),\\
				  \pi(i), & \pi(i) \geq \pi(1),
				  }
		    \text{ and }
			  \lambda'_i = \RHScase{ \lambda_1 - \lambda_{m'}, & i= 1,\\
				      \lambda_i, & i>1.
			  }
	      $$
	  Likewise, if the induction is type $\tilde{1}$, then
	      $$
		  \pi'(i) = \RHScase{\pi(i+1), & i < m'-1,\\
				  \pi(1), &  i = m'-1,\\
				  \pi(i), & i \geq m',\\
				  }
		    \text{ and }
		    \lambda_i' = \RHScase{
				    \lambda_{i+1}, & i< m'-1,\\
				    \lambda_{1}, & i= m'-1,\\
				    \lambda_{m'} - \lambda_1, & i= m',\\
				    \lambda_i, & i > m'.}
		$$

	    Let $\tau_n$ be given by
		$$
		    \tau_n(i) = n - i\text{ for } 0\leq i \leq n.
		$$
	    For $\pi\in \irr_n$, let $\pi_\tau$ be given by
		$$ \pi_\tau = \tau_{n+1} \circ \pi \circ \tau_{n+1},$$
	    noting that $\pi_\tau\in \irr_n$ as well.
	    If $\tilde{\eps} \pi$ is the result of type $\tilde{\eps}$ induction on $\pi$
		and $\eps\pi_\tau$ is the result of type $\eps$
		induction on $\pi_\tau$,
		then
		$$
		\tilde{\eps}\pi = (\eps \pi_\tau)_\tau.
		$$
		For $\lambda\in\RR_+^n$, let $\lambda_\tau$ be given by
		$$
		    (\lambda_\tau)_i = \lambda_{\tau_{n+1}(i)}.
		$$
		If $\tilde{\eps}\lambda$ and $\eps\lambda_\tau$ are defined analogously to $\tilde{\eps} \pi$
		    and $\eps\pi_\tau$,
		    then
		    $$ \tilde{\eps}\lambda = (\eps\lambda_\tau)_\tau.$$
		We define the $n\times n$ permutation matrix $P_n$ by
		$$
		    (P_n)_{ij} = \RHScase{1, & i = \tau_{n+1}(j),\\
			    0, & \text{otherwise,}}
		$$
		then we see that $A_{\pi,\tilde{\eps}} = P_n A_{\pi_\tau,\eps} P_n$,
		where $A_{\pi,\tilde{\eps}}$ is the visitation matrix that satisfies
		$\lambda = A_{\pi,\tilde{\eps}} \cdot \tilde{\eps}\lambda$.
		Furthermore,
		  $$ L_{\pi} = P_n L_{\pi_\tau} P_n,$$
		and so $A^*_{\pi,\tilde{\eps}} L_\pi A_{\pi,\tilde{\eps}} = L_{\tilde{\eps}\pi}$
		as a direct consequence.

		Therefore, if $A_1,\dots, A_N$ are visitation matrices given by
		    consecutive steps of \emph{extended Rauzy induction}, i.e.
		    left and/or right Rauzy induction,
		    and $B = A_1\cdots A_N$ is the product,
		    then
		      $$
			  B^* L_\pi B = L_{\pi^{(N)}}
		      $$
		     where $\pi$ is the initial permutation and $\pi^{(N)}$ is the
		     resulting permutation after the $N$ steps.

		  The proof of the following is a modification of Lemma \ref{LemIDOC2}
		    and has a similar proof.
		  However, the notation from \cite{cMarMouYoc2005} is significantly different and will not be included here.
		  \begin{lem}\label{LemIDOCext}
			If $T = \III_{\pi,\lambda}$ admits infinitely many steps of extended Rauzy induction
				    and $A_k$ are the corresponding matrices,
			      then for each $j\in \NN$ there exists $k_0 = k_0(j)\in \NN$ so that
			      for all $k>k_0$,
				    $$ A_{[j,j+k]} = A_k A_{k+1} \cdots A_{j+k}$$
			      is a matrix with all positive entries.
		  \end{lem}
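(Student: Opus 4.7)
The plan is to mirror the proof of Lemma \ref{LemIDOC2} from \cite{cMarMouYoc2005}, exploiting the $\tau$-symmetry developed in Section \ref{SSecLRI} to handle left steps. First I would check that admitting infinitely many extended Rauzy inductions still forces $T$ to be i.d.o.c.: this extends Lemma \ref{LemIDOC} via the identity $\tilde{\eps}\pi = (\eps\pi_\tau)_\tau$ and the analogous identity for $\lambda$, so that a left-Rauzy step on $(\pi,\lambda)$ is a right-Rauzy step on $(\pi_\tau,\lambda_\tau)$ and Keane's condition is preserved under the relabelling $\tau_{n+1}$. By Remark \ref{RemInduce}, each step is an admissible induction and the product $A_{[j,j+k]}$ is the visitation matrix of the induced map from $T^{(j-1)}$ on $I^{(j-1)}$ down to $T^{(j+k)}$ on $I^{(j+k)}$. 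The entry $(A_{[j,j+k]})_{li}$ then counts the visits of $I_i^{(j+k)}$ to $I_l^{(j-1)}$ under $T^{(j-1)}$ before the first return to $I^{(j+k)}$, so positivity amounts to the assertion that each $I_i^{(j+k)}$ encounters each $I_l^{(j-1)}$ at least once in its first-return orbit.

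Next I would mirror the combinatorial bookkeeping of the original proof. The proof for right Rauzy induction tracks, for each column index $i$, which row indices $l$ have positive $(l,i)$-entry after $k$ steps, and shows that each step either preserves or strictly enlarges this support (via the ``winner absorbs loser'' structure of the Rauzy matrices). With only finitely many permutations in $\irr_n$, a pigeonhole argument combined with i.d.o.c. forces, after sufficiently many steps, every column's support to become all of $\{1,\dots,n\}$. To cover left steps as well, I would invoke the identity $A_{\pi,\tilde{\eps}} = P_n A_{\pi_\tau,\eps} P_n$: a left step, viewed through the involution $P_n$, behaves combinatorially exactly as a right step on the $\tau$-conjugate system. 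Thus one may re-label the dynamics after each left step so that only right-Rauzy matrices appear and then apply the argument of \cite{cMarMouYoc2005} verbatim, with the final positivity statement being invariant under conjugation by the permutation matrix $P_n$.

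The main obstacle is notational rather than conceptual. The argument in \cite{cMarMouYoc2005} is phrased in the language of suspensions and winners/losers of a given combinatorial datum, and carefully tracking the effect of each induction step under the time-dependent $\tau$-relabelling above requires re-setting up their terminology in the present framework. This is the reason the paper declines to reproduce the proof in full; once that bookkeeping is in place, the positivity of $A_{[j,j+k]}$ for $k$ large follows exactly as in the right-Rauzy case, uniformly in the starting index $j$.
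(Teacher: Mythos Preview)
The paper does not actually give a proof of this lemma; it only remarks that the argument is a modification of the proof of Lemma~\ref{LemIDOC2} from \cite{cMarMouYoc2005} and declines to reproduce it because of notational differences. Your sketch is entirely consistent with that remark and in fact offers more detail than the paper does: you correctly identify the $\tau$-symmetry $A_{\pi,\tilde{\eps}} = P_n A_{\pi_\tau,\eps} P_n$ from Section~\ref{SSecLRI} as the mechanism for transporting the right-Rauzy argument to left steps, and you correctly flag that the difficulty is bookkeeping rather than a new idea.

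One small caution on your second paragraph: a single global conjugation by $P_n$ does not convert a mixed left/right sequence into a pure right-Rauzy sequence, since conjugating a right step by $P_n$ does not in general yield another right-Rauzy matrix. Your phrase ``time-dependent $\tau$-relabelling'' is the honest description, but making this precise is exactly the notational headache the paper alludes to. A cleaner route, which avoids relabelling altogether, is the dynamical interpretation you give in your first paragraph: $(A_{[j,j+k]})_{li}$ counts visits of $I_i^{(j+k)}$ to $I_l^{(j-1)}$ before first return, and since $|I^{(j+k)}|\to 0$ (Lemma~\ref{LemA3}) while $T^{(j-1)}$ is minimal (Lemma~\ref{LemMinimal}), the return times grow without bound and every column eventually meets every row. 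This argument is indifferent to whether the individual steps are left or right and sidesteps the winner/loser bookkeeping entirely.
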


     \subsection{Veech's result for $\NNN_\pi$}\label{SSecVeech}
	  The main result in this section is shown in \cite[Lemma 5.7]{cVe1984i}.
	  Please refer to that work as well as \cite{cVe1982}
		for the original definitions and proofs.

	  Consider each $\pi\in \irr_n$ to be extended
		so that $\pi(0) = 0$ and $\pi(n+1) = n+1$.
	  Note that $\pi_\tau$ respects this extension as well.
	  Let $\sigma_\pi$ be a function on $\{0,\dots,n\}$ given by
	      $$
		  \sigma_\pi(i) = \pi^{-1}(\pi(i) + 1) -1,
	      $$
	  as in \cite{cVe1982}.
	  Let $\Sigma(\pi)$ be the partition of $\{0,1,\dots,n\}$ given by orbits of $\sigma_\pi$.
	  For each $S\in \Sigma(\pi)$, let $b_S\in \ZZ^n$ be given by
	      $$
		    (b_S)_i = \chi_S(i-1) - \chi_S(i).
	      $$
	  It was shown in \cite[Lemma 5.3]{cVe1984i} that
	      $$
		    \#\Sigma(\pi) = \dim\NNN_\pi + 1,
	      $$
	    and \cite[Proposition 5.2]{cVe1984i} states that
	      $$
		    \SPAN\{b_S:S\in \Sigma(\pi)\} = \NNN_\pi.
	      $$

	  \begin{lem}[Veech \cite{cVe1984i}]\label{LemVeechPaper}
		  For $\pi\in \irr_n$ and $\eps\in \{0,1\}$, there exists
		      a bijection $\eps:\Sigma(\pi) \to \Sigma(\eps\pi)$ so that
			      $$ A_{(\pi,\eps)} b_S = b_{\eps S}$$
		  for each $S\in \Sigma(\pi)$.
	  \end{lem}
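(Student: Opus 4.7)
The plan is to verify the identity by comparing $\sigma_\pi$ and $\sigma_{\eps\pi}$ explicitly, reading off the bijection $\eps$ at the level of orbits from a localized rewiring of a handful of arrows, and then checking the matrix identity coordinate by coordinate.

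First I would treat the case $\eps = 0$. Set $p = \pi(n)$, $m = \pi^{-1}(n)$, $a = \pi^{-1}(p+1)$, and $i_0 = \pi^{-1}(n-1)$. Substituting the explicit formula for $\pi' = 0\pi$ into $\sigma_{\pi'}(i) = (\pi')^{-1}(\pi'(i)+1) - 1$, a direct case analysis on $i$ should show that $\sigma_{\pi'}(i) = \sigma_\pi(i)$ for every $i \notin \{i_0, m, n\}$, while the three arrows $i_0 \mapsto m-1$, $m \mapsto n$, and $n \mapsto a-1$ appearing in $\sigma_\pi$ are replaced in $\sigma_{\pi'}$ by the three arrows $i_0 \mapsto n$, $m \mapsto a-1$, and $n \mapsto m-1$.

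This local rewiring determines a natural bijection between $\Sigma(\pi)$ and $\Sigma(\eps\pi)$: every $\sigma_\pi$-orbit disjoint from $\{i_0, m, n\}$ coincides with a $\sigma_{\pi'}$-orbit, while the remaining orbits are matched by tracing through the altered arrows. I would then verify the identity $A_{(\pi, 0)} b_S = b_{\eps S}$ coordinatewise: since $A_{(\pi, 0)}$ is the identity plus a single $1$ in position $(n, m)$, we have $(A b_S)_i = (b_S)_i$ for $i < n$ and $(A b_S)_n = (b_S)_n + (b_S)_m$. Substituting $(b_S)_i = \chi_S(i-1) - \chi_S(i)$ and using the explicit description of how elements move between orbits under $\eps$, the identity reduces to a small collection of indicator-function equalities, each of which follows directly from the rewiring.

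The $\eps = 1$ case is handled by an analogous computation with the formulas for $1\pi$ and $A_{(\pi, 1)}$; the relevant rewiring of $\sigma_\pi$ is structurally the same, and the coordinate-level verification is essentially the same. The main obstacle in both cases is the combinatorial bookkeeping when $i_0$, $m$, and $n$ fail to be pairwise distinct, or when any of them coincides with the boundary indices $0$ or $n$ of the extended set $\{0, 1, \dots, n\}$. These degenerate configurations must be enumerated and handled separately to confirm that the orbit bijection is well defined; once this is done, the coordinate identity is a routine manipulation.
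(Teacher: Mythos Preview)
The paper does not actually prove this lemma; it is stated with attribution to Veech and the reader is referred to \cite{cVe1984i} for the proof. Your proposal to establish it by an explicit comparison of $\sigma_\pi$ and $\sigma_{\eps\pi}$ is the standard route and is correct in outline: for $\eps=0$ your three-arrow rewiring $i_0\to m-1$, $m\to n$, $n\to a-1$ becoming $i_0\to n$, $n\to m-1$, $m\to a-1$ is exactly what a direct computation yields, and the coordinate check of $A_{(\pi,0)}b_S=b_{\eps S}$ then reduces to the single nontrivial row $n$ as you say. The degenerate configurations you flag (e.g.\ $\pi(n)=n-1$, forcing $i_0=n$) do collapse some of the three arrows and must be handled, but they cause no genuine difficulty. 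One small point: for $\eps=1$ the matrix $A_{(\pi,1)}$ is a cyclic shift on the last block of rows rather than identity-plus-one-entry, so the coordinate verification there is not quite ``essentially the same'' as for $\eps=0$; it requires tracking how $\chi_S(i-1)-\chi_S(i)$ transforms under the index shift $i\mapsto i-1$ on $\{m+1,\dots,n\}$, though this is still routine.
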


	  Recall $\tau_n$, $\tau_{n+1}$ and $P=P_n$ from the previous section.
	  By direct computation, we see that
	    $$
		\sigma_{\pi_\tau} = \tau_n \circ \sigma_{\pi}^{-1} \circ \tau_n,
	    $$
	    and so $\Sigma(\pi_\tau) = \tau_n \Sigma(\pi)$.

	  \begin{cor}\label{CorVeechPaper}
		  For $\pi\in \irr_n$ and $\eps\in \{0,1\}$, there exists
		      a bijection $\tilde{\eps}:\Sigma(\pi) \to \Sigma(\eps\pi)$ so that
			      $$ A_{(\pi,\tilde{\eps})} b_S = b_{\tilde{\eps} S}$$
		  for each $S\in \Sigma(\pi)$.
	  \end{cor}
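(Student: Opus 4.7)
The plan is to reduce the corollary to Lemma \ref{LemVeechPaper} via the $\tau$-conjugation machinery already developed at the end of Section \ref{SSecLRI}. The two key identities we have in hand are
$$ A_{(\pi,\tilde{\eps})} = P_n A_{(\pi_\tau,\eps)} P_n \quad \text{and} \quad \Sigma(\pi_\tau) = \tau_n \Sigma(\pi),$$
where the second follows from $\sigma_{\pi_\tau} = \tau_n \circ \sigma_\pi^{-1} \circ \tau_n$. The whole argument amounts to transporting the right-induction bijection from Lemma \ref{LemVeechPaper} through $P_n$ and $\tau_n$, provided we understand how $P_n$ acts on the basis vectors $b_S$.

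The first concrete step is to compute the action of $P_n$ on $b_S$. Since $(P_n v)_i = v_{n+1-i}$, we have
$$ (P_n b_S)_i = (b_S)_{n+1-i} = \chi_S(n-i) - \chi_S(n+1-i),$$
while
$$ (b_{\tau_n S})_i = \chi_{\tau_n S}(i-1) - \chi_{\tau_n S}(i) = \chi_S(n+1-i) - \chi_S(n-i).$$
Thus $P_n b_S = -b_{\tau_n S}$ for every $S \subseteq \{0,\ldots,n\}$. This single sign identity is the real content of the reduction; with it in hand everything else is diagram-chasing.

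Given this, I would then apply Lemma \ref{LemVeechPaper} at the permutation $\pi_\tau \in \irr_n$ to obtain a bijection $\eps:\Sigma(\pi_\tau) \to \Sigma(\eps\pi_\tau)$ with $A_{(\pi_\tau,\eps)} b_{S'} = b_{\eps S'}$ for each $S' \in \Sigma(\pi_\tau)$. Define
$$ \tilde{\eps}S := \tau_n\bigl(\eps(\tau_n S)\bigr) \quad \text{for } S\in \Sigma(\pi).$$
Since $\tau_n \Sigma(\pi) = \Sigma(\pi_\tau)$, $\eps$ maps into $\Sigma(\eps\pi_\tau)$, and $\tau_n \Sigma(\eps\pi_\tau) = \Sigma((\eps\pi_\tau)_\tau) = \Sigma(\tilde{\eps}\pi)$, the map $\tilde{\eps}$ is a bijection $\Sigma(\pi) \to \Sigma(\tilde{\eps}\pi)$. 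The verification is then a four-step calculation:
$$ A_{(\pi,\tilde{\eps})} b_S = P_n A_{(\pi_\tau,\eps)} P_n b_S = -P_n A_{(\pi_\tau,\eps)} b_{\tau_n S} = -P_n b_{\eps(\tau_n S)} = b_{\tau_n(\eps(\tau_n S))} = b_{\tilde{\eps} S},$$
where the two minus signs from the two applications of $P_n b_{(\cdot)} = -b_{\tau_n(\cdot)}$ cancel.

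The only place anything could go wrong is the sign bookkeeping in the $P_n$ identity, so that is the step I would write out most carefully; once the two minus signs are seen to cancel, no further work is needed and the corollary reduces cleanly to the right-induction lemma.
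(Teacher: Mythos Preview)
Your proof is correct and follows essentially the same route as the paper: compute $P_n b_S = -b_{\tau_n S}$, apply Lemma~\ref{LemVeechPaper} at $\pi_\tau$, and define $\tilde{\eps} = \tau_n \circ \eps \circ \tau_n$ so that the two sign flips cancel in the chain $A_{(\pi,\tilde{\eps})} b_S = P_n A_{(\pi_\tau,\eps)} P_n b_S = b_{\tilde{\eps} S}$. Your added remark that the codomain is $\Sigma(\tilde{\eps}\pi)$ (via $(\eps\pi_\tau)_\tau = \tilde{\eps}\pi$) is a welcome clarification of what appears to be a typo in the statement.
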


	  \begin{proof}
		For each $S\in \Sigma(\pi)$ and $1\leq i \leq n$,
		    $$ \begin{array}{rcl}
			      (Pb_S)_i  & = & (b_S)_{n+1-i}\\
					& = & \chi_S(n-i) - \chi_S(n+1-i)\\
					& = & \chi_{\tau_n S}(\tau_n(n-i)) - \chi_{\tau_n S}(\tau_n(n+1-i))\\
					& = & \chi_{\tau_n S}(i) - \chi_{\tau_n S}(i-1)\\
					& = & - (b_{\tau_n S})_i.
		       \end{array}$$
		And so
			$$ A_{(\pi,\tilde{\eps})} b_S = P A_{(\pi_\tau,\eps)} P b_S = - P A_{(\pi_\tau,\eps)} b_{\tau_nS}.$$
		By Lemma \ref{LemVeechPaper} we continue,
			$$ - P A_{(\pi_\tau,\eps)} b_{\tau_nS} = -P b_{\eps (\tau_n S)} = b_{\tau_n(\eps(\tau_nS))}.$$
		Therefore the desired bijection is $\tilde{\eps} = \tau_n \circ \eps \circ \tau_n$.
	  \end{proof}

	\subsection{Invariant Spaces}

	For $n\times n$ matrix $B$, a subspace $V\subseteq \CC^n$ is \emph{$B$-invariant}
	      if $BV \subseteq V$.
	If $B$ is invertible then $V$ is $B$-invariant iff $BV = V$.
	An \emph{eigenbasis} of $B$ for $V$ is a basis $\{u_1,\dots,u_m\}$ of $V$
		  so that each $u_j$ is an eigenvector for $B$.
	Recall that an \emph{eigenvector} for $B$ is a non-zero vector $u$ with
		a corresponding \emph{eigenvalue} $\alpha$ such that $u\in \NNN_{B_\alpha^p}$
	for some $p\in\NN$ where $B_\alpha = B - \alpha I$ for identity matrix $I$.
	The lemma and corollary in this section allow us to find an eigenbasis for $\CC^n$ that
	    includes bases of invariant subspaces.
	The definition that follows then correctly associates to a $B$-invariant subspace
	      eigenvalues.

	\begin{lem}
	    Let $B$ be an $n\times n$ matrix and $V,W\subseteq \CC^n$ be subspaces such that $W\subseteq V$
		and $V,W$ are each $B$-invariant.
	    There exists an \emph{eigenbasis} $\{u_1,\dots,u_m\}$ of $B$ of $V$ so that
		$\{u_1,\dots,u_{m'}\}$ is a basis for $W$, $m' = \dim W$.
	\end{lem}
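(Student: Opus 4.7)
The plan is to reduce to the primary (generalized eigenspace) decomposition of $\CC^n$ and show that $B$-invariant subspaces respect this decomposition. Let $\alpha_1,\dots,\alpha_r$ be the distinct eigenvalues of $B$ and set $E_{\alpha_j} = \NNN_{B_{\alpha_j}^n}$, so that $\CC^n = E_{\alpha_1}\oplus \cdots \oplus E_{\alpha_r}$ is the primary decomposition. Each $E_{\alpha_j}$ is by construction $B$-invariant, and by the definition given in the excerpt, every nonzero vector of $E_{\alpha_j}$ is an eigenvector (in the generalized sense used here).

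The first real step is to show that any $B$-invariant subspace $U\subseteq \CC^n$ decomposes as
\[
    U = \bigoplus_{j=1}^r (U\cap E_{\alpha_j}).
\]
For this I would use that the spectral projections $P_j:\CC^n\to E_{\alpha_j}$ can be written as polynomials in $B$, obtained by Lagrange interpolation on the distinct roots of the minimal polynomial of $B$. Since $P_j$ is a polynomial in $B$, it maps $U$ into itself; so every $u\in U$ splits as $u=\sum_j P_j u$ with $P_j u\in U\cap E_{\alpha_j}$, giving the desired decomposition.

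Applying this to both $V$ and $W$, and using $W\subseteq V$, we obtain $W\cap E_{\alpha_j}\subseteq V\cap E_{\alpha_j}$ for each $j$. For every $j$ choose a basis of $W\cap E_{\alpha_j}$ and extend it to a basis of $V\cap E_{\alpha_j}$; every vector in this extended basis lies in $E_{\alpha_j}$, hence is an eigenvector of $B$ in the paper's sense. Concatenating the $j$-pieces, listing the $W\cap E_{\alpha_j}$-basis vectors first in each block, yields a basis $\{u_1,\dots,u_m\}$ of $V$ whose first $m'=\dim W$ vectors are a basis of $W$, and every $u_i$ is an eigenvector of $B$.

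The main obstacle is the invariance statement $U=\bigoplus_j(U\cap E_{\alpha_j})$; everything else is a routine choice of bases. The quickest justification is the polynomial representation of the spectral projections, which makes $B$-invariance automatically pass to invariance under each $P_j$; once this is in hand the result is essentially a bookkeeping exercise.
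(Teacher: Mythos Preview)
The paper states this lemma without proof, treating it as a standard fact from linear algebra, so there is no argument to compare against. Your approach is correct in substance: the key point is that the spectral projections onto the generalized eigenspaces $E_{\alpha_j}$ are polynomials in $B$, hence any $B$-invariant subspace $U$ splits as $U=\bigoplus_j (U\cap E_{\alpha_j})$, and since the paper's notion of ``eigenvector'' coincides with ``generalized eigenvector,'' any basis of $U\cap E_{\alpha_j}$ consists of eigenvectors.

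One small wording issue: as written, ``concatenating the $j$-pieces, listing the $W\cap E_{\alpha_j}$-basis vectors first in each block'' produces an ordering in which the $W$-vectors are scattered (first within each block, but the blocks are interleaved), so the first $m'$ vectors need not span $W$. What you want is to list \emph{all} the $W\cap E_{\alpha_j}$-basis vectors (over all $j$) first, and then all the complementary vectors; alternatively, simply observe that once you have an eigenbasis of $V$ containing an eigenbasis of $W$, reordering is harmless. This is a trivial fix, but the sentence as stated does not quite deliver the conclusion.
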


	\begin{cor}\label{CorSplit}
	    If $V,W\subseteq \CC^n$ are $B$-invariant subspaces, $m = \dim V$ and $m' = \dim W$, then there exists
		  an eigenbasis $\{u_1,\dots,u_n\}$ of $B$ for $\CC^n$ such that
		  \begin{itemize}
		   \item $\{u_{n-m-m'+\ell+1},\dots, u_{n-m'+\ell}\}$ is a basis for $W$,
		   \item $\{u_{n-m'+1},\dots, u_{n-m'+\ell}\}$ is a basis for $V\cap W$,
		   \item $\{u_{n-m'+1},\dots, u_n\}$ is a basis for $W$,
		  \end{itemize}
		  where $\ell = \dim(V\cap W)$.
	\end{cor}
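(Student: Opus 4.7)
My plan is to assemble the desired eigenbasis in stages, using a slight strengthening of the preceding Lemma. To begin, I would observe that $V \cap W$ and $V + W$ are both $B$-invariant, being respectively the intersection and sum of $B$-invariant subspaces. I would then record the following immediate consequence of the Lemma, which appears essential: if $W \subseteq V$ are $B$-invariant and $\{w_1,\dots,w_{m'}\}$ is \emph{any} eigenbasis of $W$, then it extends to an eigenbasis of $V$. Indeed, the Lemma produces some eigenbasis $\{u_1,\dots,u_m\}$ of $V$ with $\{u_1,\dots,u_{m'}\}$ a basis of $W$; replacing these first $m'$ vectors by $w_1,\dots,w_{m'}$ yields a set of eigenvectors, and linear independence is preserved because the $w_i$'s span the same subspace $W$ as the $u_i$'s they replace.

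With this extension principle available, I would proceed in three stages. First, apply the Lemma to $V \cap W \subseteq V$ to produce an eigenbasis of $V$, reordered so that a basis of $V \cap W$ sits at the tail; place these $m$ vectors into positions $n-m-m'+\ell+1,\dots,n-m'+\ell$, so that the $V \cap W$ part occupies positions $n-m'+1,\dots,n-m'+\ell$. Second, invoke the extension principle with this fixed eigenbasis of $V \cap W$ inside $W$ to adjoin $m'-\ell$ further $B$-eigenvectors lying in $W$, placed at positions $n-m'+\ell+1,\dots,n$. The resulting collection is an eigenbasis of $V+W$ realizing all three prescribed substructures, with linear independence following from the dimension count $(m-\ell)+\ell+(m'-\ell)=m+m'-\ell=\dim(V+W)$. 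Third, apply the extension principle once more with $V+W \subseteq \CC^n$ to fill positions $1,\dots,n-m-m'+\ell$ with an arbitrary completing eigenbasis.

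The only real subtlety is matching the chosen basis of $V \cap W$ across the two extensions, one into $V$ and one into $W$, which is precisely the role of the extension principle recorded in the first paragraph; without it, two independent applications of the Lemma would yield incompatible bases of $V \cap W$. Once that is in place, what remains is indexing bookkeeping and a dimension check, so I would not expect any serious obstacle beyond verifying that the positions line up exactly as stated.
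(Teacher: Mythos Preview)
The paper states this corollary without proof, treating it as a routine consequence of the preceding Lemma; your argument is correct and is precisely the natural way to fill in the details. Your extension principle---replacing a prescribed eigenbasis of the smaller subspace and keeping the complementary eigenvectors---is the key observation that makes the two applications of the Lemma (to $V\cap W\subseteq V$ and to $V\cap W\subseteq W$) compatible, and the span/dimension count for $V+W$ is exactly right. One incidental remark: the first bullet in the corollary as printed says ``basis for $W$'' but has $m$ vectors; comparing with its use in the proof of the Main Lemma, this is evidently a typo for ``basis for $V$,'' which is how you have correctly interpreted it.
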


	\begin{defn}
	    If $V\subseteq \CC^n$ is $B$-invariant, and $\alpha_1,\dots,\alpha_m$ are the respective eigenvalues
		for the eigenbasis in the previous lemma or corollary, then they are the \emph{eigenvalues of $B$ over $V$}.
	\end{defn}

\section{The \PF\ Eigenvalue}\label{SecPF}

	  We begin with a specific case of a fundamental result.
	  See \cite[Theorem 0.16]{cWalters2000} for a more general version of this theorem.

	  \begin{nnthm}[\PF\ Theorem]
		If $B$ is a positive matrix, then there exists positive eigenvalue $\alpha$ for $B$
		      so that for all other eigenvalues $\alpha'$ of $B$, $\alpha>|\alpha'|$.
		Furthermore, there exists a positive eigenvector $u$ for $B$ with eigenvalue $\alpha$
		      and any eigenvector $u'$ for $B$ with eigenvalue $\alpha$ is a scalar multiple of $u$.
	  \end{nnthm}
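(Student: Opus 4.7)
The plan is to prove existence of a positive eigenvector by a compactness/fixed-point argument on the standard simplex, then establish the strict spectral gap and the one-dimensionality of the top eigenspace via an equality-case analysis of the triangle inequality.

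For existence, I would set $\Delta = \{x\in\RR^n: x_i\geq 0,~|x|=1\}$ and consider the continuous self-map $f(x) = Bx/|Bx|$, which is well defined because positivity of $B$ forces $Bx$ to have strictly positive entries for every $x\in \Delta$. Brouwer's fixed point theorem yields a fixed point $u\in \Delta$ with $Bu = \alpha u$, where $\alpha = |Bu|>0$; applying $B$ once more, $u = Bu/\alpha$ has all positive entries, so $u$ is itself strictly positive.

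For the bound on other eigenvalues, I would take any eigenpair $(\alpha',v)$ with $v\in\CC^n\setminus\{0\}$, form the componentwise modulus vector $|v|$, and apply the triangle inequality row by row to obtain the entrywise bound $|\alpha'|\,|v|\leq B|v|$. Pairing against the strictly positive left \PF\ eigenvector $w$ of $B$, obtained by running the previous step on $B^*$, gives $|\alpha'|(w,|v|)\leq (w,B|v|) = \alpha(w,|v|)$, and hence $|\alpha'|\leq \alpha$. The main obstacle is upgrading this to a strict inequality: the equality case $|\alpha'| = \alpha$ forces equality in every row-wise triangle inequality, and because every $B_{ij}>0$ this can only occur if the entries $v_j$ all share a common complex argument $\theta$. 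Writing $v = e^{i\theta}|v|$ then makes $|v|$ a non-negative eigenvector of $B$ with real positive eigenvalue, forcing $\alpha' = \alpha$ and reducing the uniqueness claim for $v$ to the real case.

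Finally, to see that any real eigenvector $u'$ with eigenvalue $\alpha$ is a scalar multiple of $u$, I would take the largest $t>0$ for which $u - tu'$ remains non-negative; by choice of $t$ this vector has at least one zero coordinate. If $u'$ were not a scalar multiple of $u$, then $u - tu'$ would be a non-zero, non-negative eigenvector with eigenvalue $\alpha$, so the identity $u - tu' = B(u - tu')/\alpha$ together with positivity of $B$ would force it to be strictly positive, contradicting the zero coordinate. Complex eigenvectors reduce to this case by separating real and imaginary parts, each of which is itself a real eigenvector with eigenvalue $\alpha$.
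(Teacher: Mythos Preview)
The paper does not prove the \PF\ Theorem; it simply states it and cites \cite[Theorem 0.16]{cWalters2000} for a more general version. So there is no in-paper argument to compare against, and your outline is a standard and essentially correct proof of the classical result.

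One point does deserve care, given this paper's conventions. In Section~2.7 the paper defines an \emph{eigenvector} for $B$ with eigenvalue $\alpha$ to be any nonzero $u$ in $\NNN_{(B-\alpha I)^p}$ for some $p\in\NN$, i.e.\ a \emph{generalized} eigenvector. With that reading, the clause ``any eigenvector $u'$ for $B$ with eigenvalue $\alpha$ is a scalar multiple of $u$'' asserts algebraic simplicity of $\alpha$, not just geometric simplicity, and this is exactly what is used later (Corollary~\ref{CorPFUnique} invokes ``the simplicity of the \PF\ eigenvalue''). Your argument establishes that the ordinary eigenspace is one-dimensional, but does not yet exclude a Jordan block. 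The missing step is short: if $(B-\alpha I)v = c u$ with $c\neq 0$, pair against the positive left eigenvector $w$ (with $B^*w=\alpha w$; that the left and right \PF\ eigenvalues coincide follows from $\alpha(w,u)=(w,Bu)=(B^*w,u)=\beta(w,u)$ and $(w,u)>0$) to get $0 = (w,(B-\alpha I)v) = c(w,u)$, forcing $c=0$. Adding this sentence closes the gap and matches the strength of the statement as the paper uses it.
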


	  We call $\alpha$ the \emph{\PF\ eigenvalue}
		such a positive vector $u$ a \emph{\PF\ eigenvector}.
	  If $B$ is a positive integer matrix, then $\alpha>1$.
	  Corollary \ref{CorNullspace} tells us that $u$ is not in $\NNN_\pi$ for any $\pi\in \irr_n$.
	  Then Corollary \ref{CorPosNeverNull} forbids $u$ from being in $\NNN_{L}$ for any
		    non-zero matrix $L = L_\pi - c L_{\pi'}$.
	  Finally, Corollary \ref{CorPFUnique} tells us that for a fixed eigenbasis $\{u_1,\dots,u_n\}$ of $B$
		  with $u_1 = u$ there exists a unique $u_j$ so that $(u_1,u_j)_\pi\neq 0$.

	  \begin{defn}
		An \emph{extended Rauzy cycle} at $\pi$ is a finite sequence of consecutive steps of
		    extended Rauzy induction that begins and ends at $\pi$.
	  \end{defn}

	  \begin{lem}\label{LemVeech}
		If $B$ is described by an extended Rauzy cycle at $\pi\in \irr_n$, then
		      there exists a basis $\{b_1,\dots,b_m\}$ of $\NNN_\pi$ and $p\in \NN$ so that
		      $$ B^p b_i = b_i,$$
		for each $i\in \{1,\dots,m\}$.
	  \end{lem}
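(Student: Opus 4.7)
The plan is to track how $B$ acts on the spanning set $\{b_S : S \in \Sigma(\pi)\}$ of $\NNN_\pi$ introduced in Section~\ref{SSecVeech}. First, decompose $B = A_1 A_2 \cdots A_N$, where each $A_k$ is the visitation matrix for a single step of extended Rauzy induction sending $\pi^{(k-1)}$ to $\pi^{(k)}$, with $\pi^{(0)} = \pi^{(N)} = \pi$. By Lemma~\ref{LemVeechPaper} for right Rauzy steps and Corollary~\ref{CorVeechPaper} for left Rauzy steps, each $A_k$ induces a bijection $\phi_k : \Sigma(\pi^{(k-1)}) \to \Sigma(\pi^{(k)})$ satisfying $A_k b_S = b_{\phi_k S}$ for every $S$ in the domain.

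Because the cycle begins and ends at $\pi$, the composition $\phi := \phi_N \circ \phi_{N-1} \circ \cdots \circ \phi_1$ is a bijection of the finite set $\Sigma(\pi)$ to itself, and a straightforward induction on the number of factors gives $B b_S = b_{\phi S}$ for every $S \in \Sigma(\pi)$. Taking $p$ to be the order of $\phi$ in the symmetric group on $\Sigma(\pi)$ (for instance $p = (\#\Sigma(\pi))!$), we obtain $B^p b_S = b_S$ for every $S \in \Sigma(\pi)$.

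Finally, Veech's Proposition~5.2, as recalled in Section~\ref{SSecVeech}, asserts that $\SPAN\{b_S : S \in \Sigma(\pi)\} = \NNN_\pi$. Hence we may extract from $\{b_S : S \in \Sigma(\pi)\}$ a subcollection $\{b_1, \dots, b_m\}$ of size $m = \dim \NNN_\pi$ that is a basis for $\NNN_\pi$, and each element of this basis is individually fixed by $B^p$.

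I do not expect a serious obstacle; the only subtlety is that the $b_S$'s themselves are linearly dependent (since $\sum_{S \in \Sigma(\pi)} b_S = 0$ as $\Sigma(\pi)$ partitions $\{0, 1, \dots, n\}$), so one cannot simply call the entire collection a basis, but this is circumvented by selecting a basis from among them. The other care needed is the uniform treatment of the left and right step cases, which is precisely why Corollary~\ref{CorVeechPaper} was established.
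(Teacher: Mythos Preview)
Your proof is correct and follows essentially the same approach as the paper's: both apply Lemma~\ref{LemVeechPaper} and Corollary~\ref{CorVeechPaper} to the factors of $B$ to obtain a bijection on $\Sigma(\pi)$ under which $B$ permutes the $b_S$'s, take $p$ to be the order of that bijection, and then extract a basis of $\NNN_\pi$ from among the $b_S$'s. Your version is slightly more explicit about composing the single-step bijections and about the linear dependence among the $b_S$'s, but the argument is the same.
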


	  \begin{proof}
		 As in Section \ref{SSecVeech}, let $b_S$ for $S\in \Sigma(\pi)$.
		By applying Lemma \ref{LemVeechPaper} and Corollary \ref{CorVeechPaper} to the product $B$,
		      we have a bijection $d$ on $\Sigma(\pi)$ so that
		      $ B b_S = b_{dS}$ for each $S\in \Sigma(\pi)$.

		Let $p$ be any power such that $d^p$ is the identity on $\Sigma(\pi)$, and
		    choose the $b_1,\dots,b_m$ as a subset of the $b_S$'s that form a basis of $\NNN_\pi$.
		Then for any $i\in \{1,\dots,m\}$,
			  $$ B^p b_i = B^p b_S = b_{d^pS} = b_S = b_i, $$
		  where $b_i = b_S$.
	  \end{proof}

	  \begin{cor}\label{CorNullspace}
		Let $B$ be given by an extended Rauzy cycle at $\pi$.
		If $\beta_1,\dots,\beta_m$ are the eigenvalues of $B$ over $\NNN_\pi$, $\pi\in \irr_n$,
		      then each $\beta_j$ is a root of unity.
		Furthermore, if $B$ is a positive integer matrix with \PF\ eigenvalue $\alpha>1$,
			then $\{\beta_1,\dots,\beta_m\}\cap\{\alpha,1/\alpha\} = \emptyset$.
	  \end{cor}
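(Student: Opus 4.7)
The plan is to upgrade Lemma \ref{LemVeech} from the existence of a $B^p$-fixed basis of $\NNN_\pi$ to a statement about eigenvalues of $B$ on $\NNN_\pi$, and then use the fact that $\alpha>1$ lies off the unit circle.

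For the first clause, I would begin by checking that $\NNN_\pi$ is genuinely $B$-invariant. Since $B$ arises from an extended Rauzy cycle at $\pi$, equation \eqref{EqBLB} with $\pi^{(N)}=\pi$ gives $B^*L_\pi B = L_\pi$. Each extended Rauzy matrix $A_{(\pi,\eps)}$ or $A_{(\pi,\tilde\eps)}$ has determinant $\pm 1$, so $B$ (and hence $B^*$) is invertible; thus for $v \in \NNN_\pi$ we have $B^*L_\pi(Bv)=L_\pi v = 0$, which forces $L_\pi(Bv)=0$, i.e.\ $Bv \in \NNN_\pi$. Now Lemma \ref{LemVeech} provides a basis $\{b_1,\dots,b_m\}$ of $\NNN_\pi$ and an integer $p$ with $B^p b_i = b_i$ for every $i$. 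Because the $b_i$'s span $\NNN_\pi$, the restriction $B|_{\NNN_\pi}$ satisfies $(B|_{\NNN_\pi})^p = I$; its minimal polynomial therefore divides $x^p-1$, which has distinct roots in $\CC$. Hence $B|_{\NNN_\pi}$ is diagonalizable and all of its eigenvalues are $p$-th roots of unity. By the definition of eigenvalues over an invariant subspace, these are exactly $\beta_1,\dots,\beta_m$, so each $\beta_j$ is a root of unity.

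For the second clause, roots of unity all have modulus $1$. If $B$ is a positive integer matrix with \PF\ eigenvalue $\alpha>1$, then $|\alpha|=\alpha>1$ and $|1/\alpha|<1$, so neither $\alpha$ nor $1/\alpha$ sits on the unit circle. Consequently $\alpha,1/\alpha \notin \{\beta_1,\dots,\beta_m\}$.

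No step looks genuinely delicate; the only thing to be a little careful about is the passage from ``$B^p$ is the identity on $\NNN_\pi$'' to ``$B$ is diagonalizable there with roots-of-unity eigenvalues,'' which requires a brief appeal to the separability of $x^p-1$ over $\CC$. Everything else is a direct appeal to Lemma \ref{LemVeech}, to \eqref{EqBLB}, and to the numerical fact $\alpha>1$.
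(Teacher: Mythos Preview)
Your argument is correct and matches the paper's intended reasoning: the corollary is stated without proof there, immediately after Lemma \ref{LemVeech}, because it follows exactly as you describe. Your extra care in verifying $B$-invariance of $\NNN_\pi$ and in passing from $(B|_{\NNN_\pi})^p=I$ to roots-of-unity eigenvalues via separability of $x^p-1$ makes explicit what the paper leaves to the reader.
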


	    \begin{lem}\label{LemPositiveNeverNull}
		    If $L = L_{\pi} - c L_{\pi'}$ for distinct $\pi,\pi'\in \irr_n$ and real $c$,
			  then there exists $i$ such that row $i$ of $L$ is non-zero and
			  either non-positive or non-negative.
	    \end{lem}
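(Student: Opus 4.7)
The plan is to partition by the value of $c$ and, in each regime, produce an explicit row of $L$ that works. Two structural facts drive everything. First, row $1$ of $L_\pi$ has entries in $\{0,1\}$ and is non-zero, because irreducibility forces $\pi(1) \ne 1$, so some $j>1$ satisfies $\pi(j)<\pi(1)$. Second, if $k=\pi^{-1}(n)$, then row $k$ of $L_\pi$ equals $0$ for $j\le k$ and $1$ for $j>k$, and $k<n$ by irreducibility. Both facts hold verbatim for $L_{\pi'}$.

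When $c\le 0$, the entries of row $1$ of $L$ are $(L_\pi)_{1j}+(-c)(L_{\pi'})_{1j}$, a sum of non-negative terms that is strictly positive wherever $(L_\pi)_{1j}=1$. When $0<c<1$, the entries of row $k=\pi^{-1}(n)$ of $L$ lie in $\{0,c\}$ to the left of the diagonal and in $\{1-c,1\}$ to the right of the diagonal, so the row is non-negative and strictly positive on the right. When $c>1$, factor $L=-c\bigl(L_{\pi'}-(1/c)L_\pi\bigr)$ and reduce to the $0<c'<1$ case with the roles of $\pi,\pi'$ swapped; the non-negative row produced there becomes a non-positive, non-zero row of $L$ after multiplication by $-c<0$.

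The main obstacle is the boundary case $c=1$, where the two obvious choices may cancel completely---for instance, row $\pi^{-1}(n)$ of $L_\pi-L_{\pi'}$ vanishes identically whenever $\pi^{-1}(n)=(\pi')^{-1}(n)$. The strategy is to select a row at the largest value on which $\pi$ and $\pi'$ disagree. Since $\pi\ne\pi'$, the quantity $V=\max\{v:\pi^{-1}(v)\ne (\pi')^{-1}(v)\}$ is well-defined; set $i=\pi^{-1}(V)$ and $i'=(\pi')^{-1}(V)$, and assume without loss of generality $i<i'$ (otherwise swap $\pi$ and $\pi'$, replacing $L$ by $-L$ and interchanging the two possible conclusions). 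Maximality of $V$ forces $\pi$ and $\pi'$ to agree on the positions of every value $>V$, which in turn forces $\pi'(i)<V$ and $\pi(i')<V$. A short case analysis based on the values of $\pi(j)$ and $\pi'(j)$ relative to $V$ shows that every entry of row $i$ of $L_\pi-L_{\pi'}$ lies in $\{0,1\}$, and the $(i,i')$ entry equals $1$ because $\pi(i')<V=\pi(i)$ while $\pi'(i')=V>\pi'(i)$. Hence row $i$ is non-negative and non-zero, completing the lemma.
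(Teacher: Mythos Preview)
Your proof is correct and follows essentially the same route as the paper: the identical case split on $c$, the same choice of row $1$ for $c\le 0$, row $\pi^{-1}(n)$ for $0<c<1$, the reduction $L=-c\bigl(L_{\pi'}-\tfrac{1}{c}L_\pi\bigr)$ for $c>1$, and for $c=1$ the same ``largest disagreement'' row (your $i=\pi^{-1}(V)$ coincides with the paper's $i$ maximizing $\pi(i)$ subject to $\pi(i)\ne\pi'(i)$, since $V$ and that maximum are equal). The only cosmetic difference is that you invoke a WLOG swap to force $i<i'$, whereas the paper treats the two orderings of $i,i'$ separately when exhibiting the nonzero entry $L_{ii'}$.
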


	    \begin{proof}
		    We consider the value of $c$.
		    If $c\leq 0$, then the first row of $L$ is at least the first row of $L_{\pi}$.
		    The claim then holds for $i=1$.
		    If $c>1$ and $L' = L_{\pi'} - \frac{1}{c} L_{\pi}$
		    then $L = -c L'$.
		    If we find a non-zero row for $L'$, then the same row satisfies the
		      claim for $L$ (but with the opposite sign).
		    We therefore consider two remaining cases: $0<c<1$ and $c=1$.

		    If $0<c<1$, let $i$ satisfy $\pi(i) = n$.
		    Because $\pi\in \irr_n$, $i<n$.
		    Then if $i>j$,
			  $$ L_{ij} = (L_\pi)_{ij} - c (L_{\pi'})_{ij} = - c (L_{\pi'})_{ij} \geq 0.$$
		    If $i<j$,
			  $$ L_{ij} = (L_\pi)_{ij} - c (L_{\pi'})_{ij} = 1 - c (L_{\pi'})_{ij} >0.$$
		    Therefore row $i$ is non-zero and is non-negative.

		    If $c=1$, then let $i$ be such that $\pi(i) \neq \pi'(i)$ and that maximizes $\pi(i)$.
		    Let $k\leq n$ be the position of $i$ in $\pi$, i.e. $k = \pi(i)$.
		    Note that $\pi'(i)<k$.
		    If $\pi(j) > k$, then $\pi(j) = \pi'(j)$ and so
			      $$ (L_\pi)_{ij} = (L_{\pi'})_{ij} \Rightarrow L_{ij} = 0.$$
		    If $\pi(j) < k$ and $i>j$, then
			      $$ L_{ij} = (L_\pi)_{ij} - (L_{\pi'})_{ij} = -(L_{\pi'})_{ij} \geq 0.$$
		    If $\pi(j) < k$ and $i<j$, then
			      $$ L_{ij} = (L_\pi)_{ij} - (L_{\pi'})_{ij} = 1 - (L_{\pi'})_{ij} \geq 0.$$
		    Row $i$ of $L$ is non-negative, and we must verify that it is non-zero.
		    Let $i'\neq i$ satisfy $\pi'(i') = k$.
		    By definition, $\pi(i')<\pi(i)$ and $\pi'(i') > \pi'(i)$.
		    Either
			  \begin{itemize}
			   \item $i<i'$ and so $(L_\pi)_{ii'} = 1$ and $(L_{\pi'})_{ii'} =0$, or
			    \item $i>i'$ and so $(L_\pi)_{ii'} = 0$ and $(L_{\pi'})_{ii'} = -1$.
			  \end{itemize}
		    Therefore, $L_{ii'} =1$ and row $i$ of $L$ is non-zero.
	    \end{proof}

	    \begin{cor}\label{CorPosNeverNull}
		  If $v$ is a positive vector, then it does not belong to the nullspace of
			  $$ L = L_\pi - c L_{\pi'}$$
		  for distinct $\pi,\pi'\in \irr_n$ and complex $c$.
	    \end{cor}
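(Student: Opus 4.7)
The plan is to reduce the complex case to the real case, where Lemma \ref{LemPositiveNeverNull} gives the conclusion almost immediately.

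First, when $c \in \RR$, Lemma \ref{LemPositiveNeverNull} produces a row $i$ of $L$ that is non-zero and of one sign (all non-negative or all non-positive). For a positive vector $v$, the inner product of this row with $v$ is a sum of real numbers with a strict sign and at least one strictly nonzero summand (since the row is non-zero and $v$ has no zero entries), so $(Lv)_i \neq 0$. Hence $v \notin \NNN_L$.

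Next, suppose $c = a + bi$ with $a,b \in \RR$ and $b \neq 0$. Since $L_\pi$, $L_{\pi'}$, and $v$ are real, the equation $Lv = 0$ splits into real and imaginary parts: the imaginary part reads $-b L_{\pi'} v = 0$, forcing $L_{\pi'} v = 0$ because $b \neq 0$; substituting this into the real part yields $L_\pi v = 0$ as well. But then for the real scalar $c_0 = 1$ we have $(L_\pi - c_0 L_{\pi'}) v = 0$, and since $\pi \neq \pi'$ this contradicts what was just proved in the real case.

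The only real step that needs care is confirming the two cases exhaust all complex $c$ and that the reduction preserves the hypothesis $\pi \neq \pi'$ needed to invoke Lemma \ref{LemPositiveNeverNull}; both are immediate from the setup. In particular, no new linear-algebraic obstacle appears, and the heavy lifting has already been carried out in the row-by-row sign analysis of Lemma \ref{LemPositiveNeverNull}.
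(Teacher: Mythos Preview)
Your proof is correct and follows essentially the same approach as the paper: the real case is handled identically via Lemma \ref{LemPositiveNeverNull}, and the non-real case is reduced to the real case by separating real and imaginary parts. The only cosmetic difference is that the paper argues directly (the imaginary part of $Lv$ equals $-\mathrm{Im}(c)\,L_{\pi'}v$, which is non-zero by the real case), whereas you run the contrapositive and finish with $c_0=1$; both are equivalent.
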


	    \begin{proof}
		  Suppose $c$ is real.
		  By Lemma \ref{LemPositiveNeverNull}, there exists a row $i$ that is non-zero and
			non-negative (resp. non-positive).
		  Therefore $(L v)_i >0$ (resp. $(Lv)_i<0$) and so $Lv \neq 0$.
		  If $c$ is non-real, then by Lemma \ref{LemPositiveNeverNull} the real vector
			      $L_{\pi'} v$ is non-zero.
		  Therefore
		      $$ L v = (L_\pi - c L_{\pi'}) v$$
		  must have a non-zero imaginary component and cannot be zero.
	    \end{proof}

	  \begin{lem}\label{LemBF}
		Let $B^*LB = L$ for anti-symmetric matrix $L$ and matrix $B$.
		If $u,u'$ are eigenvectors of $B$ with corresponding eigenvalues $\alpha,\alpha'$,
			and $(u,u')_L \neq 0$, then $\overline{\alpha} \alpha' = 1$.
	  \end{lem}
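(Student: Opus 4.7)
The plan is to exploit the invariance $B^* L B = L$ by substituting it into the bilinear form. If we first treat the case of genuine eigenvectors, i.e. $Bu = \alpha u$ and $Bu' = \alpha' u'$, the computation is essentially a one-liner:
\[
(u,u')_L \;=\; u^* L u' \;=\; u^*(B^* L B)u' \;=\; (Bu)^* L (Bu') \;=\; \overline{\alpha}\alpha'\,(u,u')_L.
\]
Since $(u,u')_L \neq 0$ by assumption, we may cancel to obtain $\overline{\alpha}\alpha' = 1$.

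The main obstacle is that the paper's definition of eigenvector allows $u \in \NNN_{B_\alpha^p}$ for arbitrary $p$, so in general $Bu \neq \alpha u$. I would handle this by proving the contrapositive by strong induction on $p + p'$, where $p,p'\in\NN$ are the minimal integers with $(B-\alpha I)^p u = 0$ and $(B-\alpha' I)^{p'} u' = 0$. The claim to induct on is:
\emph{if $\overline{\alpha}\alpha' \neq 1$ and $u,u'$ are generalized eigenvectors of $B$ for $\alpha,\alpha'$ with ranks summing to $p+p'$, then $(u,u')_L = 0$.}
The base case $p = p' = 1$ is the true-eigenvector computation above.

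For the inductive step, set $v = (B-\alpha I)u$ and $v' = (B-\alpha' I)u'$, so that $v$ (resp.\ $v'$) is either zero or a generalized eigenvector of rank strictly less than $p$ (resp.\ $p'$) for the same eigenvalue $\alpha$ (resp.\ $\alpha'$). Writing $Bu = \alpha u + v$ and $Bu' = \alpha' u' + v'$, expand
\[
(u,u')_L = (Bu)^* L (Bu') = \overline{\alpha}\alpha'(u,u')_L + \overline{\alpha}(u,v')_L + \alpha'(v,u')_L + (v,v')_L.
\]
Each of the three trailing terms pairs generalized eigenvectors for the same pair of eigenvalues $(\alpha,\alpha')$, whose product $\overline{\alpha}\alpha'$ is still unequal to $1$, and whose rank sums are strictly less than $p+p'$. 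The inductive hypothesis forces these three terms to vanish, and the identity collapses to $(1 - \overline{\alpha}\alpha')(u,u')_L = 0$, giving $(u,u')_L = 0$ as desired.

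The only delicate point in the induction is verifying that $v$ and $v'$ really do have strictly smaller rank (which follows from minimality of $p,p'$) and that the reduced pairs still satisfy $\overline{\alpha}\alpha' \neq 1$ (which they trivially do, since the eigenvalues are unchanged). Beyond that bookkeeping, the argument is a mechanical expansion.
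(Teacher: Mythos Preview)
Your proof is correct and follows essentially the same approach as the paper: induct on the (sum of) generalized-eigenvector ranks, expand $(Bu)^*L(Bu')$, and reduce to the lower-rank terms. Your contrapositive formulation is in fact slightly tidier than the paper's direct version, since assuming $\overline{\alpha}\alpha'\neq 1$ kills all three cross terms at once by the inductive hypothesis and avoids the paper's case split on which cross term is nonzero.
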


	  \begin{proof}
		Recall that the \emph{order} of eigenvalue $u$ with eigenvalue $\alpha$ is the minimum
		      $p\in \NN$ so that $u\in \NNN_{B_\alpha^p}$, $B_\alpha = B - \alpha I$.
		We proceed by induction, first on the order of $u$ and then on the sum of the orders
		    of $u$ and $u'$.

		If $u,u'$ are both \emph{true eigenvectors}, i.e. order $1$, then
		      $$ (u,u')_L = (Bu,Bu')_L = \overline{\alpha}\alpha' (u,u')_L,$$
		and $\overline{\alpha} \alpha' =1$ as $(u,u')_L \neq 0$ by assumption.

		If $u$ is higher order and $u'$ is order $1$,
			let $w$ be defined by $A u = \alpha u +w$
			and note that $w$ is one order lower than $u$ for the same
			eigenvalue $\alpha$.
		Then
		  $$ (u,u')_L = (Bu,Bu')_L = \overline{\alpha}\alpha' (u,u')_L + \alpha' (w,u')_L.$$
		If $(w,u')_L\neq 0$, then the claim follows by induction.
		If $(w,u')_L = 0$, then the claim follows just as in the base case.

		If $u,u'$ are both of higher order, let $w$ be defined as before and let $w'$ be defined by
		      $Bu' = \alpha' u' + w'$.
		  Then $(u,u')_L = (Bu,Bu')_L$ which is the sum
		    $$ \overline{\alpha}\alpha' (u,u')_L + \alpha' (w,u')_L
			    + \overline{\alpha} (u,w')_L +  (w,w')_L.$$
		  If any term but the first is non-zero, then the claim follows by induction.
		    Note that $(v,v')_L = \overline{(v',v)_L}$.
		  If only the first term is non-zero, then the claim is verified as before.
	  \end{proof}

	  \begin{cor}\label{CorPFUnique}
	      Let $B^* L B = L$ for anti-symmetric matrix $L$ and positive matrix $B$.
	      Let $u_1,\dots,u_n$ an eigenbasis for $B$ with respective eigenvalues $\alpha_1,\dots,\alpha_n$
		  such that $u_{n-m+1},\dots,u_n$ forms a basis of $\NNN_L$ of dimension $m < n$.
	      If $\alpha_1>0$ is the \PF\ eigenvalue with positive eigenvector $u_1$
		  (in particular $u_1\notin \NNN_L$),
		  then there is a unique $j \leq n-m$ so that $(u_1,u_j)_L \neq 0$.
	      This is also the unique $j \leq n-m$ so that $\alpha_j = 1/\alpha_1$.
	  \end{cor}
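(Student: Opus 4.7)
The argument splits into three pieces: existence of a non-zero pairing, identification of its eigenvalue, and uniqueness. For existence, I would first show that some $j\le n-m$ has $(u_1,u_j)_L\neq 0$. For $j>n-m$ the pairing vanishes automatically since $u_j\in\NNN_L$. If every $(u_1,u_j)_L$ with $j\le n-m$ also vanished, then $u_1^*Lu_j=0$ throughout the eigenbasis, whence $u_1^*L=0$; using that $L$ is real anti-symmetric and $u_1$ is real positive, this gives $Lu_1=0$, contradicting $u_1\notin\NNN_L$. For any such $j$, applying Lemma~\ref{LemBF} to the identity $B^*LB=L$ gives $\overline{\alpha_1}\alpha_j=1$, so $\alpha_j=1/\alpha_1$ (since $\alpha_1>0$ is real).

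The main work is uniqueness. I would study the $(n-m)\times(n-m)$ matrix $M$ with entries $M_{jk}=(u_j,u_k)_L$ for $j,k\le n-m$. The complementary rows and columns of the full $n\times n$ Gram matrix of $L$ in the eigenbasis vanish: for $j>n-m$ we have $Lu_j=0$, and since $L$ is real the space $\NNN_L$ is closed under conjugation, so $\overline{u_j}\in \NNN_L$ gives also $u_j^*L=0$. As the Gram matrix has the same rank as $L$, namely $n-m$, the block $M$ must be non-singular. Lemma~\ref{LemBF} says $M_{jk}=0$ unless $\overline{\alpha_j}\alpha_k=1$. Grouping the indices by eigenvalue into classes $G_\alpha=\{j\le n-m:\alpha_j=\alpha\}$, the matrix $M$ acquires a block structure where the $(G_\alpha,G_{\alpha'})$ block can be nonzero only when $\alpha'=1/\overline\alpha$, and non-singularity of $M$ forces $|G_\alpha|=|G_{1/\overline\alpha}|$ whenever these classes are distinct. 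The \PF\ theorem yields $|G_{\alpha_1}|=1$, while the self-paired case $\alpha_1=1$ is ruled out because the sole entry $(u_1,u_1)_L=u_1^TLu_1$ of that $1\times 1$ block vanishes by real anti-symmetry, contradicting non-singularity of $M$. Therefore $|G_{1/\alpha_1}|=1$, giving a unique $j\le n-m$ with $\alpha_j=1/\alpha_1$, and by the existence step together with Lemma~\ref{LemBF} this is the unique index with $(u_1,u_j)_L\neq 0$.

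The main obstacle is the block-pairing step: verifying carefully that non-singularity of $M$ together with the eigenvalue restriction from Lemma~\ref{LemBF} forces matching multiplicities $|G_\alpha|=|G_{1/\overline\alpha}|$, and cleanly disposing of the self-paired case $\alpha_1=1$. Everything else is bookkeeping around the identity $B^*LB=L$ and the PF theorem.
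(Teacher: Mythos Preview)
Your argument is correct, but it takes a genuinely different route from the paper's. The paper argues the uniqueness directly: assuming two indices $j\neq j'$ both satisfy $(u_1,u_j)_L\neq 0$ and $(u_1,u_{j'})_L\neq 0$, Lemma~\ref{LemBF} forces $\alpha_j=\alpha_{j'}=1/\alpha_1$, and simplicity of the \PF\ eigenvalue forces $(u_i,u_j)_L=(u_i,u_{j'})_L=0$ for all $i\neq 1$. One then checks that the specific combination $u=c'u_j-cu_{j'}$ (with $c=(u_1,u_j)_L$, $c'=(u_1,u_{j'})_L$) pairs trivially with every basis vector and hence lies in $\NNN_L$, contradicting the linear independence of $u_j,u_{j'}$ from the chosen basis of $\NNN_L$.

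Your approach instead establishes the global structural fact that the restricted Gram matrix $M=(u_j^*Lu_k)_{j,k\le n-m}$ is non-singular, and then reads off the multiplicity pairing $|G_\alpha|=|G_{1/\overline\alpha}|$ from the block-sparsity dictated by Lemma~\ref{LemBF}. This is a bit more machinery than the paper needs, but it buys you a stronger statement (the full eigenvalue pairing on the quotient by $\NNN_L$), and it makes transparent why $\alpha_1=1$ is impossible. The paper's argument is shorter and tailored to the single row of $M$ indexed by $u_1$; yours is the ``symplectic'' proof that explains the phenomenon uniformly. Both rely on exactly the same two inputs: Lemma~\ref{LemBF} and the simplicity of the \PF\ eigenvalue.
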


	  \begin{proof}
	      The final claim follows from the first by Lemma \ref{LemBF}.
	      Because $u_1 \notin \NNN_L$,
		    there must exist $u_j$ so that $(u_1,u_j)_L = c \neq 0$.
	      By Lemma \ref{LemBF}, $\alpha_j = 1/\alpha_1$.
	      Because $u_j\notin \NNN_L$, $j\leq n-m$.

	      Suppose by contradiction there exists $j' \neq j$ so that $(u_1,u_{j'})_L = c'\neq 0$.
	      Again $j'\leq n-m$ and by Lemma \ref{LemBF}, $\alpha_{j'} = \alpha_j$.
	      If there exists $i \neq 1$ so that either $(u_i,u_{j'})_L \neq 0$
		  or $(u_i,u_j)_L \neq 0$,
		  then $\alpha_i = \alpha_1$,
		   a contradiction to the simplicity of the \PF\ eigenvalue.
	      Therefore $(u_k,u_j)_L$ is non-zero iff $k=1$ and the same statement
		  holds for $(u_k,u_{j'})_L$.
	    
	      Let $u = c' u_j - c u_{j'}$ and note that
		  $$ (u_1,u)_L = c'(u_1,u_j)_L - c (u_1,u_{j'})_L = c'c - c c' = 0.$$
	      Because $(u_k,u)_L=0$ for $k\neq 1$, $c\in \NNN_L$.
	      This implies a linear dependence between $u_j$, $u_{j'}$ and $u_{n-m+1},\dots,u_n$,
		    a contradiction.
	  \end{proof}

\section{Proof of Main Theorem 1}\label{SecMain}

\begin{mainlem}
      If $\tilde{B}$ is a positive matrix defined by an extended Rauzy cycle, then the initial permutation $\pi\in \irr_n$
	    is unique.
\end{mainlem}

\begin{proof}[Proof of Main Theorem 1]
      Let $B_1,B_2,\dots$ be the matrix products defined by an infinite sequence of extended Rauzy induction steps,
	    and assume by contradiction that there exist distinct $\pi,\pi'\in \irr_n$ such that
	    infinite induction steps beginning at $\pi$ and $\pi'$ each exist and define the $B_k$'s.
      There exist $\pi_k$'s and $\pi'_k$'s, $k\in \NN_0$, so that $\pi_0 = \pi$,
	    $\pi'_0 = \pi'$ and for each $k\in \NN$,
	      $$ B_k^*L_{\pi_{k-1}}B_k = L_{\pi_k} \text{ and }B_k^*L_{\pi'_{k-1}}B_k = L_{\pi'_k}.$$
      Because the $B_k$'s are invertible and by Lemma \ref{LemLPi}, $L_{\pi_k} \neq L_{\pi'_k}$ by induction on $k$
		and so $\pi_k \neq \pi'_k$ for all $k\in \NN_0$ and are uniquely determined by
		$\pi$, $\pi'$ and the $B_k$'s.
      There exist distinct $\tilde{\pi},\tilde{\pi}'\in \irr_n$ so that $\pi_k = \tilde{\pi}$ and $\pi'_k = \tilde{\pi}'$
	    simultaneously for infinitely many $k$.
      By Lemma \ref{LemIDOC2} we may choose such $k_0,k_1$ so that $k_0<k_1$ and $\tilde{B} = B_{k_0+1} B_{k_0+2} \cdots B_{k_1}$ is positive.
      Therefore $\tilde{B}$ is a positive matrix defined by an extended Rauzy cycle at $\tilde{\pi}$
	    and also an extended Rauzy cycle at $\tilde{\pi}'$.
      By the Main Lemma $\tilde{\pi} = \tilde{\pi}'$,
	    a contradiction.
\end{proof}

\begin{proof}[Proof of Main Lemma]
      Suppose $\tilde{B}$ is a positive integer matrix that is described by two extended Rauzy cycles:
		one each at distinct $\tilde{\pi},\tilde{\pi}'\in \irr_n$.
      Then by equation \eqref{EqBLB},
	      $$ \tilde{B}^* L_{\tilde{\pi}} \tilde{B} = L_{\tilde{\pi}}
		  \text{ and }
		  \tilde{B}^* L_{\tilde{\pi}'} \tilde{B} = L_{\tilde{\pi}'}.$$
      Let $m = \dim(\NNN_{\tilde{\pi}})$, $m' = \dim(\NNN_{\tilde{\pi}'})$ and $\ell = \dim(\NNN_{\tilde{\pi}} \cap \NNN_{\tilde{\pi}'})$.
      Using Corollary \ref{CorSplit}, let $\{u_1,\dots,u_n\}$ be an eigenbasis for $B$ with respective eigenvalues $\alpha_1,\dots,\alpha_n$ such that
	      \begin{itemize}
	       \item $\alpha_1>1$ is the \PF\ eigenvalue
		      and $u_1$ is positive,
		\item $\{u_{n-m-m'+\ell+1},\dots, u_{n-m'+\ell}\}$ is a basis for $\NNN_{\tilde{\pi}}$,
		\item $\{u_{n-m'+1},\dots, u_{n-m'+\ell}\}$ is a basis for $\NNN_{\tilde{\pi}} \cap \NNN_{\tilde{\pi}'}$, and
		\item $\{u_{n-m'+1},\dots, u_n\}$ is a basis for $\NNN_{\tilde{\pi}'}$.	
	      \end{itemize}
	Because $u_1$ is not in $\NNN_{\tilde{\pi}}$ there must exist a unique
	      $j\leq n + \ell - m - m'$ so that $(u_1,u_j)_{\tilde{\pi}}\neq 0$ by Corollary \ref{CorPFUnique}.
	By Corollary \ref{CorNullspace} this is also the unique $1\leq j\leq n$
	      so that $\alpha_j = 1/\alpha_1$.
	 It follows that $(u_1,u_i)_{\tilde{\pi}'} \neq 0$ iff $i = j$ as well,
		and $j \leq n + \ell -m- m'$.

      Let $c_1 = (u_1,u_j)_{\tilde{\pi}}$, $c_2 = (u_1,u_j)_{\tilde{\pi}'}$
	      and $L = L_{\tilde{\pi}} - \frac{c_1}{c_2} L_{\tilde{\pi}'}$.
      For $i\neq j$, 
		$$(u_1,u_i)_L = (u_1,u_i)_{\tilde{\pi}} - \frac{c_1}{c_2} (u_1,u_i)_{\tilde{\pi}'} = 0 - \frac{c_1}{c_2} 0 = 0$$
	    and
		$$ (u_1,u_j)_L = (u_1,u_j)_{\tilde{\pi}} - \frac{c_1}{c_2} (u_1,u_j)_{\tilde{\pi}'} = c_1 - \frac{c_1}{c_2} c_2 = 0.$$
      This implies that $u_1\in\NNN_L$, a contradiction to Corollary \ref{CorPosNeverNull}.
\end{proof}

\section{Proof of Main Theorem 2}\label{SecMain2}

The following result is proven in \cite[Theorem 4.3]{cDolPer2013}
    under different notation.
A proof is provided in Appendix \ref{SecB}.

\begin{lem}\label{LemAdmissIsRauzy}
      Let $T:I\to I$ be an i.d.o.c. $n$-IET.
      Then $J\subsetneq I$ is admissible iff the induction realized by $J$
	    is given by consecutive steps of extended Rauzy induction.
\end{lem}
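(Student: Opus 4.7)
The backward direction is immediate from Remark \ref{RemInduce}. Each single step of extended (left or right) Rauzy induction produces an admissible sub-interval by construction, and the compositional clause of Remark \ref{RemInduce} then preserves admissibility across any finite sequence of such steps.

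For the forward direction, I would proceed by induction on $|A|:=\sum_{i,j}A_{ij}$, the entry sum of the visitation matrix from $T$ on $I$ down to the induced IET on $J$. When $|A|$ is minimal ($|A|=n$, forcing $A$ to be the identity), we have $J=I$ and the empty sequence of Rauzy steps suffices. For the inductive step, I would exhibit a single extended Rauzy step $T\to T'$ on $I'$ with $J\subseteq I'\subsetneq I$ such that $J$ remains admissible for $T'$; the visitation matrix from $I'$ to $J$ is then the tail factor $A_{\mathrm{rest}}$ in the decomposition $A=A_{(\pi,\eps)}A_{\mathrm{rest}}$, and a direct computation using the structure of $A_{(\pi,\eps)}$ (identity plus one extra $1$) shows $|A_{\mathrm{rest}}|<|A|$ whenever that extra $1$ lands in a nontrivial row of $A_{\mathrm{rest}}$, so the induction must terminate with $I'=J$.

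The heart of the argument is the following combinatorial claim: if $J=[a,b)\subsetneq I=[0,c)$ is admissible for $T=\III_{\pi,\lambda}$, then, with $m=\pi^{-1}(n)$ and $m'=\pi^{-1}(1)$, at least one of the inequalities $\min(\lambda_m,\lambda_n)\leq c-b$ (requiring $b<c$) or $\min(\lambda_1,\lambda_{m'})\leq a$ (requiring $a>0$) holds with strict inequality, so that a valid Rauzy step on the corresponding side is well-defined and lands in an interval containing $J$. The proof of the claim uses the Rokhlin tower decomposition $I=\bigsqcup_{i,k} T^kJ_i$ furnished by admissibility: the rightmost point $c$ of $I$ sits at the top of some tower over a piece $J_i$; tracing this tower back via $T^{-1}$ identifies the widths of its top two levels with $\lambda_n$ and $\lambda_m$, forcing the right gap $c-b$ to accommodate $\min(\lambda_m,\lambda_n)$. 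A symmetric analysis at the left endpoint, mirrored by the $\tau$-conjugation of Section \ref{SSecLRI}, treats the left side. Given the claim, admissibility of $J$ for $T'$ follows from the two-out-of-three principle in Remark \ref{RemInduce} applied with $I''=J$.

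The main obstacle will be making this tower/endpoint analysis rigorous: one must identify the tower sitting beneath $c$, show that its top level really does have width exactly $\min(\lambda_m,\lambda_n)$ (and not some smaller piece that would obstruct Rauzy induction), and exclude the degenerate coincidence $\lambda_n=\lambda_m$ by combining the i.d.o.c.\ hypothesis with the admissible structure (since a shared discontinuity would contradict Keane's condition or distinctness of orbits). This combinatorial bookkeeping is what Appendix \ref{SecA} is designed to supply; the formal proof of Lemma \ref{LemAdmissIsRauzy} is then assembled from these pieces in Appendix \ref{SecB}.
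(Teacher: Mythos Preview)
Your overall strategy is sound and runs parallel to the paper's, though organized differently. Both arguments iterate extended Rauzy moves to shrink the ambient interval toward $J$; the paper phrases the key step contrapositively (once no further Rauzy move keeps $J$ inside, it computes the natural decomposition of $J$ explicitly and shows it has more than $n$ pieces, so $J$ is not admissible) and obtains termination from Lemma~\ref{LemA3}, whereas you phrase it directly (if $J$ is admissible and proper, some Rauzy move is available) and terminate via the entry sum $|A|$. Your termination device is a clean combinatorial alternative to the paper's topological one, and the computation $|A|=|A_{\mathrm{rest}}|+(\text{a row sum of }A_{\mathrm{rest}})\geq |A_{\mathrm{rest}}|+1$ goes through because every row of a visitation matrix is nonzero (the towers cover $I'$). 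The direct/contrapositive distinction in the key step is cosmetic: the two claims are logically equivalent.

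The one place your sketch needs repair is the tower/endpoint analysis. All levels $T^kJ_i$ of a single Rokhlin tower have the \emph{same} width $|J_i|$, so ``the widths of its top two levels [are] $\lambda_n$ and $\lambda_m$'' cannot be literally correct, and the rightmost point of $I$ need not sit at the top of its tower. What actually forces the inequality is closer to the paper's explicit computation: when neither Rauzy move fits (that is, $c-b<\min(\lambda_m,\lambda_n)$ and $a<\min(\lambda_1,\lambda_{m'})$), one locates an extra cut in the first-return partition of $J$ coming from a backward orbit of some $\beta_i$ landing strictly in the interior of $J$, giving at least $n+1$ pieces. The paper does this case by case, with the standard permutation ($\pi(1)=n$, $\pi(n)=1$) requiring a separate argument because the naive count only gives $n$ pieces and one must chase the orbit one step further. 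Your heuristic can be salvaged along these lines, but as written it does not yet identify the mechanism producing the extra discontinuity.
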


\begin{defn}
    An \emph{admissible induction sequence} for $T=\III_{\pi,\lambda}:I \to I$
	is a sequence $I',I'',\dots,I^{(k)},\dots$ so that the induction from $I^{(k-1)}$ to $I^{(k)}$
	is admissible for each $k$.
\end{defn}

By Lemma \ref{LemA3}, it follows that $|I^{(k)}|\to 0$ as $k\to\infty$ for any
      admissible induction sequence.
We say that $B_1,B_2,\dots$ are defined by an admissible induction sequence $I',I'',\dots$ if
    for each $k$ $B_k$ is the visitation matrix the induction of $T^{(k-1)}$ on $I^{(k)}$,
    where $T^{(k)}$ is the induction of $T$ on $I^{(k)}$,
    or equivalently $T^{(k)}$ is the induction of $T^{(k-1)}$ on $I^{(k)}$.

\begin{proof}[Proof of Main Theorem 2]
    A sequence of matrices $B_1,B_2,\dots$ defined by an admissible induction sequence
	must be given by products of $A_k$'s given by extended Rauzy induction by
	Lemma \ref{LemAdmissIsRauzy}.
    The result then follows from Main Theorem 1.
\end{proof}

\appendix

\section{Admissibile Inductions}\label{SecA}

      We will give a construction of the induced map given by $T:I\to I$ over
	  sub-interval $I'\subsetneq I$.
      Let $T = \III_{\pi,\lambda}$, $\pi\in \irr_n$ and $\lambda\in \RR_+^n$,
	  be an i.d.o.c. $n$-IET with sub-intervals $I_i = [\beta_{i-i},\beta_i)$, $1\leq i\leq n$,
	  where we recall $ \beta_i = \sum_{j\leq i} \lambda_j$ for $0\leq i \leq n$.
      Also, recall the return time $r(x)$ of $x\in I'$ to $I' = [a',b')$ by $T$.

      Given $x\in I'$,
      	we describe how to construct the sub-interval $I'_x\subseteq I'$ so that
      	\begin{enumerate}
      			\item $x\in I'_x$,
      			\item for each $z,z'\in I'_x$, $r(z) = r(z')$,
      			\item for each $0\leq k < r(x)$, $T$ restricted to $T^k I'_x$ is
      					a translation,
      			\item for each $0\leq k < r(x)$,
      					there exists $j=j(k)$ so that $T^k I'_x\subseteq I_j$,
      					and
      			\item $I'_x$ is the maximal sub-interval with these properties.
      	\end{enumerate}
      Compare properties 2--4 with those of intervals $I'_i$ for
      		admissibility in Definition \ref{DefAdmiss}.

      Observe that for any sub-interval $[c,d)$ such that $[c,d)\subseteq I_j$ for some $j$,
	  $T$ restricted to $[c,d)$ is a translation.
      Also, $T[c,d) \subseteq I_{j'}$ for some $j'$ iff $(c,d) \cap T^{-1}\DDD = \emptyset$ where
	    $$ \DDD = \{\beta_1,\dots,\beta_{n-1}\}.$$
      Let $a(x)$ be the minimum $y\in \III' \cap [0,x]$ so that
		\begin{itemize}
		 \item $r(z) = r(x)$ for all $z\in [y,x]$, and
		 \item for each $0\leq k <r(x)$, $T^k[y,x] \subset I_j$ for some $j = j(k)$.
		\end{itemize}
      It is an exercise to see that
	    $$x - a(x) = \min\{T^k x - z: 0\leq k <r(x),~ z\in \DDD'\cap [0,T^kx]\},$$
      where $\DDD' = \{a',b',0\}\cup\{\beta_1,\dots,\beta_{n-1}\}$.
      Therefore, either $a(x) = a'$ or $a(x) = T^{-r_(z)} z$ for some $z\in \DDD'$,
			  where $r_-(z) = \min\{k \in \NN_0: T^{-k}z \in (a',b')\}$.
      Likewise, let $b(x)$ be defined by
	    $$b(x)-x = \min\{z- T^k x: 0\leq k <r(x),~ z\in (\{|\lambda|\}\cup \DDD')\cap [T^kx, |\lambda|]\},$$
      and note that $b(x) \in (I'\cup\{b'\})\cap [x,|\lambda|]$ is the maximum value $y$ so that
		\begin{itemize}
		 \item $r(z) = r(x)$ for all $z\in [x,y)$, and
		 \item for each $0\leq k <r(x)$, $T^k[x,y) \subset I_j$ for some $j = j(k)$.
		\end{itemize}
		Either $b(x) = b'$ or $b(x) = a(x')$ for some $x'>x$.
		
      Let $I'_x = [a(x),b(x))$ for each $x\in I'$.
      Because $I'_y = I'_x$ for each $y\in I'_x$,
	      	 $$\PPP_{I'} = \{I'_x: x\in I'\}$$
      	 is a partition of $I'$.
      If $\PPP_{I'} = \{I'\}$, then $T$ is periodic and cannot be and i.d.o.c. IET.
      Therefore $\#\PPP_{I'} \geq 2$.

      \begin{lem}\label{LemA2}
	    For i.d.o.c. $n$-IET $T:I\to I$ and sub-interval $I'\subseteq I$,
		if $m = \#\PPP_{I'}$
		then $n \leq m \leq n+2$.
      \end{lem}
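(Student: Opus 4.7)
The plan is to establish the two inequalities separately, both leveraging the explicit description of $a(x)$ and $b(x)$ given just before the lemma.

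For the lower bound $m \geq n$, I will produce $n-1$ distinct interior cut points of $\PPP_{I'}$. For each $j\in\{1,\dots,n-1\}$, the i.d.o.c.\ property (together with Lemma \ref{LemMinimal} applied to $T^{-1}$) guarantees the backward $T$-orbit of $\beta_j$ eventually enters $(a',b')$; set $k_j := r_-(\beta_j)$ and $y_j := T^{-k_j}\beta_j \in (a',b')$. For sufficiently small $\eps > 0$ the points $T^{k_j}(y_j \pm \eps) = \beta_j \pm \eps$ lie in $I_j$ and $I_{j+1}$ respectively, so the next application of $T$ translates them by distinct amounts; this forces $y_j - \eps$ and $y_j + \eps$ to lie in different atoms of $\PPP_{I'}$, confirming that $y_j$ is an atom boundary. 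The $y_j$'s are pairwise distinct: a coincidence $y_j = y_{j'}$ with (say) $k_j \leq k_{j'}$ would give $T^{k_{j'} - k_j}\beta_j = \beta_{j'}$, contradicting the disjoint-orbits clause of i.d.o.c.\ (or forcing $j=j'$). Together with the outer left endpoint $a'$, this yields at least $n$ distinct left endpoints of atoms, hence $m \geq n$.

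For the upper bound $m \leq n+2$, the displayed formulas imply that every interior cut lies in $\{T^{-r_-(z)}z : z \in \DDD' \cup \{|\lambda|\}\}$, which a priori contains up to $n+3$ points. The critical reduction is that the sources $z = 0$ and $z = |\lambda|$ are subsumed by two of the $\beta_j$'s. Because $\pi$ is irreducible, there exist unique $j^* \in \{2,\dots,n\}$ and $k^* \in \{1,\dots,n-1\}$ with $\pi(j^*) = 1$ and $\pi(k^*) = n$; a direct computation of $\omega_{j^*}$ and $\omega_{k^*}$ from the IET definition yields $T(\beta_{j^*-1}) = 0$ and $T(\beta_{k^*}^-) = |\lambda|^-$. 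Since $0, |\lambda| \notin (a',b')$, one checks by unwinding the definition of $r_-$ that $r_-(0) = 1 + r_-(\beta_{j^*-1})$, so
\[
    T^{-r_-(0)}(0) \;=\; T^{-r_-(\beta_{j^*-1})}\beta_{j^*-1},
\]
and symmetrically $T^{-r_-(|\lambda|)}(|\lambda|) = T^{-r_-(\beta_{k^*})}\beta_{k^*}$ as a left-sided cut. Thus the interior cuts all lie in $\{T^{-r_-(z)}z : z \in \{a',b',\beta_1,\dots,\beta_{n-1}\}\}$, a set of cardinality at most $n+1$. Including the leftmost boundary $a'$, the total number of atoms is at most $n+2$.

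The main obstacle is the redundancy argument for $z = 0$ and $z = |\lambda|$: one must invoke irreducibility to pin down the absorbing $\beta$'s (note $\pi(1)=1$ or $\pi(n)=n$ would violate $\pi\in\irr_n$) and to confirm that $\beta_{j^*-1}$ and $\beta_{k^*}$ are genuine members of $\DDD$ rather than the boundary points $0$ or $|\lambda|$. The edge cases in the lower bound (e.g.\ $\beta_j \in (a',b')$, in which case $k_j = 0$ and $y_j = \beta_j$ itself) are handled uniformly by the same one-sided orbit argument.
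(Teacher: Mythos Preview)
Your argument is correct and matches the paper's approach: both count the left endpoints of atoms as backward first-returns $T^{-r_-(z)}z$ of points $z$ in (essentially) $\DDD'$, use i.d.o.c.\ to show the pullbacks of $\beta_1,\dots,\beta_{n-1}$ are distinct for the lower bound, and use the identity $T(\beta_{j^*-1})=0$ (with $\pi(j^*)=1$) to collapse one source for the upper bound. The only cosmetic difference is that the paper works exclusively with $a(x)$, so $|\lambda|$ never enters its list of sources and your separate reduction for $z=|\lambda|$ is unnecessary---though harmless once interpreted as a one-sided limit as you do.
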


      \begin{proof}
      	Let $\DDD_{I'} = \{a'\}\cup\{T^{-r_-(z)}z: z\in \DDD'\}$
      			and $\gamma_j = T^{-r_-(\beta_j)}\beta_j$ for $0\leq j <n$.
      	As discussed in the previous paragraphs,
      		$a(x) \in \DDD_{I'}$ for each $x$.
      	Furthermore, $a(z) = z$ for each $a\in \DDD_{I'}$.
      	Therefore $\#\PPP_{I'} = \#\DDD_{I'}$.
      	Because $T$ is i.d.o.c., $T^\ell \beta_j = \beta_{j'}$ iff
      		$j=\pi^{-1}(1)$, $j'=0$ and $\ell=1$.
      	It follows that $\gamma_0 = \gamma_{\pi^{-1}(1)}$
      			and $\gamma_j \neq \gamma_{j'}$ for
      			distinct $j,j'>0$.
      	Therefore $\#(\DDD_{I'}\setminus\{a'\}) \geq n-1$, or $\#\PPP_{I'} \geq n$,
      				and $\#(\DDD_{I'}\setminus\{a'\}) \leq n+1$ or $\#\PPP_{I'} \leq n+2$.
      \end{proof}
      
      We order and name the sub-intervals in $\PPP_{I'}$ as $I'_1,\dots,I'_m$.
      We call this the \emph{natural decomposition} of $I'$ by $T' = T|_{I'}$,
	    and $I'$ is admissible iff $m = n$.
      The next statement proves that
	$ T|_{I''} = (T|_{I'})|_{I''}$,
      under appropriate choices of $T$, $I'$ and $I''$,
      and the natural decompositions agree with this identity.

      \begin{lem}\label{LemA1}
	    Let $T = \III_{\pi,\lambda}$ be an $n$-IET defined on $I$
		    with sub-intervals $I_1,\dots,I_n$
	      and let $\emptyset \subsetneq I'' \subsetneq I' \subsetneq I$ be
	      sub-intervals.
	    If $T' = T|_{I'}$ with natural decomposition $I_1',\dots,I_m'$ of $I'$,
		$T'' = T|_{I''}$ with natural decomposition $I_1'',\dots, I_{m'}''$ of $I''$
		and $S = T'|_{I''}$ with the natural decomposition $J_1',\dots,J_{m''}'$ of $I''$,
	     then $m'' = m'$ and $J'_i = I''_i$ for all $1\leq i \leq m'$.
      \end{lem}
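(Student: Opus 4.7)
The plan is to show that the two natural decompositions of $I''$ — the one arising from $T''$ and the one from $S = T'|_{I''}$ — coincide as partitions of $I''$; then $m'' = m'$ and $J_i' = I_i''$ follow immediately by simultaneous left-to-right ordering. The approach is to verify that a sub-interval $J \subseteq I''$ is maximal satisfying the defining conditions of the $T''$-decomposition if and only if it is maximal satisfying those of the $S$-decomposition.

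First I would record the identification $S = T''$ as maps on $I''$: because $I'' \subseteq I'$, the first $T$-return of $z \in I''$ to $I''$ coincides with its first $T'$-return. Concretely, for each $z \in I''$ with $T$-return time $r(z)$ and $T'$-return time $s(z)$ to $I''$, the orbit $z, Tz, \ldots, T^{r(z)}z$ meets $I'$ at exactly the $s(z)+1$ times $0 = t_0 < t_1 < \cdots < t_{s(z)} = r(z)$, with $(T')^k z = T^{t_k} z$ and gap $t_{k+1}-t_k$ equal to the $T$-return time of $(T')^k z$ to $I'$.

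The core step is the equivalence of the two sets of piece-defining conditions. If $J$ sits inside a single $S$-piece, so $(T')^k J \subseteq I'_{j_k}$ for $0\leq k < s$, then each $I'_{j_k}$ has by the definition of the natural decomposition of $I'$ a constant $T$-return $r_{j_k}$ to $I'$ and $T^\ell I'_{j_k} \subseteq I_{q(k,\ell)}$ for $0 \leq \ell < r_{j_k}$; composing arc-by-arc along the $s$ successive returns gives $J$ the $T''$-conditions, namely constant total $T$-return $r = \sum r_{j_k}$ to $I''$ and a uniform $I_j$-itinerary on the full orbit $T^m J$ for $0\leq m<r$. Conversely, if $J$ satisfies the $T''$-conditions with fixed $I_j$-itinerary, I would argue that all points of $J$ enter $I'$ at the same steps $t_0 < \cdots < t_s = r$; granting this, $(T')^k J = T^{t_k} J$ is a well-defined sub-interval of $I'$ whose $T$-orbit on the arc $[t_k, t_{k+1})$ has a fixed $I_j$-itinerary and terminates at the next $T$-return to $I'$, so by the characterization of the natural decomposition of $I'$ this sub-interval lies inside a single $I'_{j_k}$.

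The main obstacle is the uniform-entry claim in the second direction: ruling out that some $T^\ell J$ straddles $a'$ or $b'$ for a maximal $T''$-piece $J$. My plan is to invoke the extremal formulas for $a(x)$ and $b(x)$ from the construction above, together with the fact that $T^r J \subseteq I'' \subseteq I'$: a straddle at step $\ell < r$ would place an interior point of $J$ at $T^{-\ell}a'$ or $T^{-\ell}b'$, and following the iterate $T^{r-\ell}$ would exhibit this point as the preimage of an element of $\DDD'' = \{a'', b'', 0\} \cup \{\beta_1,\dots,\beta_{n-1}\}$ already cutting $J$, contradicting $J$'s maximality as a $T''$-piece. With the equivalence of conditions established, the maximal sub-intervals satisfying either set of conditions coincide, and so the two natural decompositions of $I''$ are identical as partitions; ordering them left to right yields $J_i' = I_i''$ and $m'' = m'$.
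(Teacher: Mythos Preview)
Your plan is close in spirit to the paper's, which recasts everything via \emph{return words}: for $x$ in a sub-interval, $q(x)$ records the $I_j$-itinerary up to first return, and the natural decomposition is then characterized as the unique partition into sub-intervals on which $q$ is constant within pieces and distinct between pieces. Your forward direction (an $S$-piece satisfies the $T''$-conditions) is exactly the paper's first claim: concatenating the words $q_{j_k}$ along the $T'$-orbit produces a well-defined $T$-return-word $\tilde q_i$ to $I''$, constant on each $J'_i$. For the other direction the paper does \emph{not} attempt your backward implication; instead it argues (its second claim) that distinct $S$-pieces have distinct concatenated words $\tilde q_i$, so the refinement of $\{I''_j\}$ by $\{J'_i\}$ is injective and therefore an equality of partitions.

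Your resolution of the ``main obstacle'' has a real gap. If a maximal $T''$-piece $J$ has $T^\ell J$ straddling $a'$, so that some interior $z\in J$ satisfies $T^\ell z=a'$, you assert that iterating $T^{r-\ell}$ exhibits $z$ as a preimage of a point of $\DDD''=\{a'',b'',0,\beta_1,\dots,\beta_{n-1}\}$, contradicting maximality. But $a'$ bears no built-in relation to $\DDD''$: all that follows is $T^{r-\ell}a'=T^r z\in I''$, which is no constraint at all, and from the standpoint of the $T''$-decomposition the endpoints $a',b'$ of $I'$ are generic points of $I$. Nothing forces $z$ to be a $T''$-cut-point, so the contradiction does not close and your backward implication remains unproved. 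If you want to salvage this route you would need an independent reason why the forward $T$-orbit of $a'$ (respectively $b'$) meets $\DDD''$ before first entering $I''$; the paper sidesteps the issue entirely via the injectivity-of-return-words argument described above.
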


      For $x\in I'$, let $q(x) = (j_0,\dots,j_{r(x)-1})$ be the ordered $r(x)$-tuple
	      given by $T^k x \in I_{j_k}$ for $0\leq k < r(x)$.
      Note that $I'_1,\dots,I'_m$ is the natural decomposition of $I'$ by $T'$
	  iff the following statements hold:
	  \begin{enumerate}
	   \item $q(x) = q(y)$ for all $x,y\in I'_i$, $1\leq i \leq m$, and
	    \item $q(x) \neq q(y)$ if $x\in I'_i$ and $y\in I'_{i'}$ for $i\neq i'$.
	  \end{enumerate}
      The definition of $q(x)$ coincides with a \emph{return word} when considering
	    an IET by its natural symbolic coding.
      See \cite{cKea1975} for an introduction to this coding.
      Because these tuples are constant on each $I'_i$, let $q_i = q(x)$ for any $x\in I'_i$.

	\begin{proof}[Proof of Lemma \ref{LemA1}]
		Let $q_i$'s be the return words of $I'$ in $I$ of $T$,
		    $q'_i$'s be the return words of $I''$ in $I$ of $T'$
		    and $q''_i$'s the return words of $I''$ in $I'$ of $S$.
		Let $q_iq_{i'}$ denote the concatenation of the tuples $q_i$ and $q_{i'}$,
		    meaning
		    $$ q_iq_{i'} = (j_0,\dots,j_{r-1},j'_0,\dots,j'_{r'-1})$$
		where $q_i = (j_0,\dots,j_{r-1})$ and $q_{i'} = (j'_0,\dots,j'_{r'-1})$.

		Let $\tilde{q}_i$, $1\leq i \leq m''$, be given by
			$$ \tilde{q}_i = q_{j_0} q_{j_1} \cdots q_{j_{r-1}}$$
		where $q''_i = (j_0,\dots,j_{r-1})$.
		These are the return words for the $J'_i$'s in $I$ by $T$.
		The lemma follows from two claims: each $\tilde{q}_i$ equals $q'_j$ for some
		    $j= j(i)$ and $\tilde{q}_i \neq \tilde{q}_{i'}$ for $i\neq i'$.
		The first claim follows as for any $x\in J'_i$ and $k \in \NN_0$,
			if $T^kx \in I'_{j'}$ then $T^{k+k'}x\in I_{j''_{k'}}$
		    for each $0\leq k'< r_{j'}$,
		    where $q_{j'} = (j''_0,\dots,j''_{r_{j'}-1})$.
		The second claim holds because $q''_i \neq q''_{i'}$ and
		    $q_j \neq q_{j'}$ for $i\neq i'$ and $j\neq j'$.
	\end{proof}

      \begin{lem}\label{LemA3}
	    Let $T = \III_{\pi,\lambda}$, $\pi\in \irr_n$ and $\lambda\in\RR_+^n$,
		be an i.d.o.c. $n$-IET.
	    If $I',I'',I''',\dots,I^{(k)},\dots$, $k\in \NN$,
		are admissible sub-intervals so that $\emptyset \subsetneq I^{(k)} \subsetneq I^{(k-1)}$
		for each $k\in \NN$, then either $J =\bigcap_{k=1}^\infty I^{(k)}$ is empty or
		consists of one point.
      \end{lem}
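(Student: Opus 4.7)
The plan is to argue by contradiction: assume $J = \bigcap_k I^{(k)}$ has positive Lebesgue measure, so it contains a half-open sub-interval $J' = [a,b)$ with $\ell := b-a > 0$, and derive a contradiction by a pigeonhole argument on the visitation matrices of the inductions of $T^{(k)} := T|_{I^{(k)}}$ onto $J'$.

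The first step is to show that $J'$ is itself admissible for $T$, i.e., $\#\PPP_{J'} = n$. Since $T$ is i.d.o.c., the first return time to $J'$ under $T$ is uniformly bounded by some integer $R_*$ (by Lemma \ref{LemMinimal} together with the construction preceding Lemma \ref{LemA2}), and Lemma \ref{LemA2} gives $\#\PPP_{J'} \in \{n, n+1, n+2\}$. Fix a piece $J'_j$ of $\PPP_{J'}$ with return word $\omega_j$ of length $r^{J'}_j \le R_*$, and remove from the interior of $J'_j$ the finitely many points $x$ whose orbit $T(x), \ldots, T^{r^{J'}_j - 1}(x)$ hits $\{a,b\}$. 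For any surviving $x$ every intermediate iterate lies strictly outside $[a,b]$, and as the fringes $[a^{(k)}, a)$ and $[b, b^{(k)})$ shrink, each such specific iterate eventually leaves $I^{(k)}$. Hence, for $k$ large, the first return of $x$ to $I^{(k)}$ coincides with its first return to $J'$, placing $x$ in a piece of $\PPP_{I^{(k)}}$ with return word $\omega_j$. Distinct $J'_j$'s force distinct pieces of $\PPP_{I^{(k)}}$, so $\#\PPP_{J'} \le \#\PPP_{I^{(k)}} = n$, and combined with the lower bound from Lemma \ref{LemA2} we get $\#\PPP_{J'} = n$.

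With $J'$ admissible for $T$, Remark \ref{RemInduce} makes $J'$ admissible for each $T^{(k)}$. Let $C^{(k)}$ be the $n \times n$ visitation matrix of the induction of $T^{(k)}$ on $J'$ and let $\mu$ be the length vector of $\PPP_{J'}$; then $\lambda^{(k)} = C^{(k)}\mu$ and $|I^{(k)}| = |\lambda^{(k)}|$. A column sum of $C^{(k)}$ is the number of $T^{(k)}$-iterates required for the corresponding piece of $J'$ to first return to $J'$, which is bounded above by the corresponding $T$-first return time, hence by $R_*$. Thus every entry of $C^{(k)}$ is at most $R_*$, so $C^{(k)}$ ranges over a finite set. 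By pigeonhole there exist $k < k'$ with $C^{(k)} = C^{(k')}$, and then $\lambda^{(k)} = C^{(k)}\mu = C^{(k')}\mu = \lambda^{(k')}$ forces $|I^{(k)}| = |I^{(k')}|$, contradicting the strict nesting $I^{(k')} \subsetneq I^{(k)}$.

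The main obstacle will be the admissibility step: one must verify carefully that removing finitely many backward orbit-preimages of $a$ and $b$ from each $J'_j$ still leaves a nontrivial open set inside each piece, and must handle the asymmetry of the convention $a \in J'$ but $b \notin J'$ so that obstructions genuinely arise only at preimages of $b$. Once admissibility of $J'$ is in hand, the pigeonhole on bounded $C^{(k)}$'s together with the length identity $\lambda^{(k)} = C^{(k)}\mu$ closes the argument with no appeal to invertibility of any visitation matrix.
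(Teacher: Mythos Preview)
Your argument is correct once you take $J'$ to be the full limit interval $[a_\infty,b_\infty)$ with $a_\infty = \lim_k a^{(k)}$ and $b_\infty = \lim_k b^{(k)}$; for an arbitrary sub-interval $J'\subsetneq J$ the ``fringes'' $[a^{(k)},a)\cup[b,b^{(k)})$ need not shrink to a null set, and then the claim that each intermediate iterate of $x_j$ eventually leaves $I^{(k)}$ fails. With this choice the admissibility step and the subsequent pigeonhole on the bounded integer matrices $C^{(k)}$ go through exactly as you outline.

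Your route is, however, genuinely different from the paper's and more elaborate. The paper never establishes that $J$ is admissible: it only uses the bound $\#\PPP_J\le n+2$ from Lemma~\ref{LemA2} to guarantee that $\eps=\min_i|J_i|>0$, then picks $k_0$ with $|I^{(k_0)}\setminus J|<\eps/2$. By Lemma~\ref{LemA1} the natural decomposition of $J$ under $T^{(k_0)}$ agrees with that under $T$, and minimality (Lemma~\ref{LemMinimal}) forces some piece $J_i$ to have $T^{(k_0)}$-return time exceeding $1$, whence $T^{(k_0)}J_i\subseteq I^{(k_0)}\setminus J$ and $|J_i|<\eps/2$, a contradiction. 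This bypasses both your admissibility argument and the pigeonhole. On the other hand, your approach yields the extra fact that $J$ is admissible, and in fact your pigeonhole step does not even need that: the rectangular $n\times m$ visitation matrices (with $m=\#\PPP_{J'}\le n+2$) already have integer entries bounded by $R_*$ and satisfy $\lambda^{(k)}=C^{(k)}\mu$, so the contradiction $|I^{(k)}|=|I^{(k')}|$ follows without first forcing $m=n$.
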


      \begin{proof}
	  Suppose by contradiction that $J$ is
	      contains an open interval and therefore is a sub-interval of $I$ by possibly removing
	      the right endpoint.
	  Let $J_1,\dots,J_m$ be the natural decomposition of $T|_{J}$.
	  By Lemma \ref{LemA2}, $m \leq n+2$.
	  Because $m$ is finite, $\eps= \min\{|J_i|:1\leq i \leq m\}>0$ is
	    well-defined.
	  Fix $k_0\in \NN$ so that $|I^{(k_0)}| < |J| + \eps/2$
		and let $T' = T|_{I^{(k_0)}}$.
	  By Lemma \ref{LemA1}, $J_1,\dots,J_m$ is the natural decomposition
	    of $T'$ restricted to $J$.
	  By Lemma \ref{LemMinimal}, because $J\subsetneq I^{(k_0)}$
	      there must exist $J_i$ with return time
	      in $I^{(k)}$ greater than $1$.
	  In this case $T^{(k)}$ acts on $J_i$ by translation and
		  $T^{(k)}J_i \subseteq I^{(k)} \setminus J$.
	  However, this implies that $|J_i|<\eps/2$, a contradiction.
      \end{proof}

      \section{Admissibility and Extended Rauzy Induction}\label{SecB}

      \begin{proof}[Proof of Lemma \ref{LemAdmissIsRauzy}]
	    Suppose that $J = [a,b)$ is not given by extended Rauzy induction,
		and let $S = T|_{J}$ be the induced map with
		natural decomposition $J_1,\dots,J_m$.
	    We will construct a sub-interval $I'$ of $I$ with induced $T'= T|_{I'}$
		    so that $I' = [a',b')$ is given by steps of extended Rauzy induction and if
		    $T'=\III_{\pi',\lambda'}$ then
		\begin{equation}\label{Eqaa'bb'}
		    a' \leq a < a' + \min\{\lambda'_1,\lambda'_{i_0}\}
		      \text{ and }
		    b' - \min\{\lambda'_{i_1},\lambda'_n\} < b \leq b',
		 \end{equation}
	    where $i_0,i_1$ satisfy $\pi'(i_0) = 1$ and $\pi'(i_1) = n$.
	    Because $J$ is not realized by extended Rauzy induction,
		    at most one equality may occur.
	    We then will show that $J$ is not admissible for $I'$,
		which shows our claim by Lemma \ref{LemA1}.

	    Initially, let $I' = I$ and so $J \subseteq I'$ trivially.
	    Given our definitions, suppose the inequalities for $a$ and $a'$ do
		not hold for $I$'.
	    We then perform left Rauzy induction on $T|_{I'}$ and replace $I'$ by
		this new sub-interval,
		noting that $a'$ will increase.
	    If the inequalities between $b$ and $b'$ do not hold,
		then we act by right Rauzy induction and replace $I'$ with
		this new sub-interval.
	    Note that $J\subseteq I'$ still holds after each step.
	    This process must terminate at our desired $I'$,
		otherwise we have constructed an infinite properly
		nested sequence of admissible intervals that
		each contain $J$, a contradiction to Lemma \ref{LemA3}.

	    Given $I'$ and $J$ so that inequalities \eqref{Eqaa'bb'} hold,
		$J_1,\dots,J_m$ is the natural decomposition
		from $T'$ on $J$ and the natural decomposition from $T$ on $J$ as
		well by Lemma \ref{LemA1}.
	    If $a = a'$, then $m = n+1$ and
		      $$ J_i = \RHScase{
				  I'_i, & i \leq i_1,\\
				  I'_{i_1} \setminus (T')^{-1}I_n', & i = i_1,\\
				  (T')^{-1}I_n', & i = i_1 + 1,\\
				  I'_{i-1}, & i_1 < i \leq m,}$$
	      by direct computation.
	      Similarly, $m = n+1$ if $b=b'$.
	      If both $a>a'$ and $b<b'$,
		 then analogous computations will hold unless $i_0 = n$ and $i_1 = 1$, or
		   $\pi'$ is \emph{standard}.

	      In this case, each $I'_i$ is a sub-interval in the natural decomposition of $J$ for $1<i<n$
		      with return time $1$.
	      Let $J'_1 = I'_1\cap J$ and $J'_n = I'_n \cap J$.
	      Both $J'_1 \cap (T')^{-2} J'_n$ and $J_n'\cap (T')^{-2}J'_1$ are
		    sub-intervals in the decomposition of $J$
		    with return time $2$.
	      Because $T'$ is i.d.o.c., $(T')^2 J'_1 \neq J'_n$ and $(T')^2 J'_n \neq J'_1$.
	      Therefore, there must be at least one
		    more sub-interval of $J$ in $(J'_1 \setminus (T')^{-2} J'_n)\cup (J_n'\setminus (T')^{-2}J'_1)$
	      and so $m>n$.
      \end{proof}

\bibliographystyle{abbrv}
\bibliography{../../bibfile2}

\end{document}